\theoremstyle{plain}
\newtheorem{theorem}{Theorem}[section]
\newtheorem{lemma}[theorem]{Lemma}
\newtheorem{proposition}[theorem]{Proposition}
\newtheorem{corollary}[theorem]{Corollary}
\theoremstyle{remark}
\newtheorem{remark}[theorem]{Remark}
\theoremstyle{definition}
\newtheorem{example}[theorem]{Example}
\newtheorem{problem}[theorem]{Problem}
\newtheorem{definition}[theorem]{Definition}
\newtheorem*{notation}{Notation}
\newtheorem*{question}{Question}
\numberwithin{equation}{section}
\DeclareMathOperator{\ch}{char}
\DeclareMathOperator{\End}{End}
\DeclareMathOperator{\supp}{supp}
\DeclareMathOperator{\diff}{diff}
\DeclareMathOperator{\Hom}{Hom}
\begin{document}

\title[On weak equivalences of gradings]{On weak equivalences of gradings}

\author{Alexey Gordienko}
\address{Vrije Universiteit Brussel, Belgium}
\email{alexey.gordienko@vub.ac.be}

\author{Ofir Schnabel}
\address{University of Haifa, Israel}
\email{os2519@yahoo.com}

\keywords{Associative algebra, grading, full matrix algebra, residually finite group.}

\begin{abstract}
When one studies the structure (e.g. graded ideals, graded subspaces, radicals,\dots) or graded polynomial identities of graded algebras, the grading group itself does not play an important role, but can be replaced by any other group that realizes the same grading. Here we come to the notion of weak equivalence of gradings: two gradings are weakly equivalent if there exists an isomorphism between the graded algebras that maps each graded component onto a graded component. 
The following question arises naturally: when a group grading on a finite dimensional algebra is weakly equivalent to a grading by a finite group?
It turns out that this question can be reformulated purely group theoretically in terms of the universal group of the grading.
 Namely, a grading is weakly equivalent to a grading by a finite group if and only if the universal group of the grading is residually finite with respect to a special subset of the grading group. The same is true for all the coarsenings of the grading if
and only if the universal group of the grading is hereditarily residually finite with respect to the same subset.
We show that if $n\geqslant 349$, then on the full matrix algebra $M_n(F)$ there exists an elementary  group grading that is not weakly equivalent to any grading by a finite (semi)group, and if $n\leqslant 3$,
then any elementary grading on $M_n(F)$ is weakly equivalent to an elementary grading by a finite group.
\end{abstract}

\subjclass[2010]{Primary 16W50; Secondary 20E26, 20F05.}

\thanks{The first author is supported by Fonds Wetenschappelijk Onderzoek~--- Vlaanderen post doctoral fellowship (Belgium). The second author was partially supported by ISF grant 797/14.}

\maketitle

\section{Introduction}
When studying graded algebras, one has to determine, when two
graded algebras are considered ``the same'' or equivalent.

Recall that a decomposition $\Gamma \colon A=\bigoplus_{s \in S} A^{(s)}$
of an algebra $A$ over a field $F$ into a direct sum of subspaces $A^{(s)}$
is a \textit{grading} on $A$ by a (semi)group $S$
if $A^{(s)}A^{(t)}\subseteq A^{(st)}$ for all $s,t\in S$.
Then we say that $S$ is the \textit{grading (semi)group} of $\Gamma$ and the algebra $A$ is \textit{graded} by $S$.

Let \begin{equation}\label{EqTwoSemiGroupGradings}\Gamma_1 \colon A=\bigoplus_{s \in S} A^{(s)},\qquad \Gamma_2
\colon B=\bigoplus_{t \in T} B^{(t)}\end{equation} be two gradings where $S$
and $T$ are (semi)groups and $A$ and $B$ are algebras.

The most restrictive case is when we require that both grading (semi)groups
coincide:

\begin{definition}[{e.g. \cite[Definition~1.15]{ElduqueKochetov}}]
\label{DefGradedIsomorphism}
The gradings~(\ref{EqTwoSemiGroupGradings}) are \textit{isomorphic} if $S=T$ and there exists an isomorphism $\varphi \colon A \mathrel{\widetilde\to} B$
of algebras such that $\varphi(A^{(s)})=B^{(s)}$
for all $s\in S$.
In this case we say that $A$ and $B$ are \textit{graded isomorphic}.
\end{definition}

In some cases, such as in~\cite{ginosargradings}, less restrictive requirements are more
suitable. 
\begin{definition}[{\cite[Definition~2.3]{ginosargradings}}]\label{DefGinosarEquivalent}
The gradings~(\ref{EqTwoSemiGroupGradings})  are \textit{equivalent} if there exists an isomorphism
$\varphi \colon A \mathrel{\widetilde\to} B$
of algebras and an isomorphism $\psi \colon S \mathrel{\widetilde\to} T$
of (semi)groups such that $\varphi(A^{(s)})=B^{\bigl(\psi(s)\bigr)}$
for all $s\in S$.
\end{definition}
\begin{remark}
The notion of graded equivalence was considered by Yu.\,A.~Bahturin, S.\,K.~Seghal, and M.\,V.~Zaicev in~\cite[Remark after Definition 3]{BahturinZaicevSeghalGroupGrAssoc}.
In the paper of V.~Mazorchuk and K.~Zhao~\cite{Mazorchuk} it appears under the name of graded isomorphism.
A.~Elduque and M.\,V.~Kochetov refer to this notion as a weak isomorphism
of gradings~\cite[Section~3.1]{ElduqueKochetov}.
More on differences in the terminology in
graded algebras can be found in \cite[\S 2.7]{ginosargradings}.
\end{remark}

 If one studies the graded structure of a graded algebra or its graded polynomial identities~\cite{AljaGia, AljaGiaLa,
BahtZaiGradedExp, GiaLa, ASGordienko9}, then it is not really important by elements of which (semi)group the graded components are indexed.
A replacement of the grading (semi)group leaves both graded subspaces and graded ideals graded.
In the case of graded polynomial identities reindexing
the graded components leads only to renaming the variables.
(It is important to notice however that graded-simple algebras
graded by semigroups which are not groups can have a structure quite different from group graded graded-simple algebras~\cite{GordienkoJanssensJespers}.)
 Here we come naturally to the notion of weak equivalence of gradings.

\begin{definition}\label{def:weakly}
The gradings~(\ref{EqTwoSemiGroupGradings}) are \textit{weakly equivalent}, if there
exists an isomorphism $\varphi \colon A \mathrel{\widetilde\to}B$
of algebras such that for every $s\in S$ with $A^{(s)}\ne 0$ there
exists $t\in T$ such that $\varphi\left(A^{(s)}\right)=B^{(t)}$.
\end{definition}
 \begin{remark}
 This notion appears in~\cite[Definition~1.14]{ElduqueKochetov}
 under the name of equivalence.
 We have decided to add here the adjective ``weak'' in order to avoid confusion
 with Definition~\ref{DefGinosarEquivalent}.
 \end{remark}

For a grading $\Gamma \colon A=\bigoplus_{s \in S} A^{(s)}$, we denote by $\supp \Gamma := \lbrace s\in S \mid A^{(s)}\ne 0\rbrace$ its support.
\begin{remark}Each weak equivalence $\varphi$ between gradings $\Gamma_1$ and $\Gamma_2$
 induces a bijection $\psi \colon \supp \Gamma_1 \mathrel{\widetilde\to} \supp \Gamma_2$ defined by $\varphi\left( A^{(s)}\right) = B^{\left(\psi(s)\right)}$ for $s\in \supp \Gamma_1$.
\end{remark}

Obviously, if gradings are isomorphic, then they are equivalent and if they are equivalent then they are also weakly equivalent.
It is important to notice that none of the converse is true.
However, if gradings~(\ref{EqTwoSemiGroupGradings}) are weakly equivalent
and $\varphi \colon A \mathrel{\widetilde\to}B$ is the corresponding isomorphism of algebras,
then $\Gamma_3 \colon A=\bigoplus_{t \in T} \varphi^{-1}\left( B^{(t)}\right)$ is a grading on $A$ isomorphic to $\Gamma_2$ and the grading $\Gamma_3$ is obtained from $\Gamma_1$ just by reindexing the homogeneous components.
Therefore, when gradings~(\ref{EqTwoSemiGroupGradings}) are weakly
equivalent, we say that $\Gamma_1$ \textit{can be regraded} by $T$.
If $A=B$ and $\varphi$ in Definition~\ref{def:weakly} is the identity map, we say that $\Gamma_1$ and $\Gamma_2$ are realizations
of the same grading on $A$ as, respectively, an $S$- and a $T$-grading.

Note that $\Gamma$ is obviously equivalent to $\Gamma_0 \colon A=\bigoplus_{s \in S_1} A^{(s)}$
where $S_1$ is a subsemigroup of $S$ generated by $\supp \Gamma$.
(If $S$ is a group, we can consider instead the subgroup generated by $\supp \Gamma$.)

As we have already mentioned above, for many applications it is not important
which particular grading among weakly equivalent ones
we consider. Thus, if it is possible, one can try to regrade a semigroup grading by a group or even a finite group. The situation, when the latter
is possible, is very convenient since the algebra graded by a finite group $G$
is an $FG$-comodule algebra and, in turn, an $(FG)^*$-module algebra
where $FG$ is the group algebra of $G$, which is a Hopf algebra, and $(FG)^*$ is its dual.
In this case one can use the techniques of Hopf algebra actions instead of working with a grading directly (see e.g.~\cite{ASGordienko3}).
Therefore, the following question arises naturally:

\begin{question}
Is it possible to regrade any grading of a finite dimensional
algebra by a finite group ?
\end{question}

It is fairly easy to show (see Proposition~\ref{prop:abelian} below) that each grading of a finite dimensional algebra by an abelian group $G$ is weakly equivalent to a grading by a finite group.
In fact, the class of groups $G$ with this property is much broader
and includes at least all locally residually finite groups (see~\cite[Proposition 1.2]{DasNasDelRioVanOyst} and Theorem~\ref{TheoremFinGradingsToGroupsTransition} below). Recall that a group is \textit{residually finite} if the intersection of its normal subgroups of finite index is trivial. A group  $G$ is \textit{locally residually finite} if every finitely generated subgroup of $G$ is residually finite.

In 1996 M.\,V.~Clase, E.~Jespers, and \'A.~Del R\'\i o~\cite[Example~2]{ClaseJespersDelRio} (see also~\cite[Example 1.5]{DasNasDelRioVanOyst}) gave an example of a group graded ring with finite support that cannot be regraded by a finite (semi)group. Despite the fact that they constructed a ring, not an algebra, it is obvious how to make an analogous example of an algebra over a field. However all these examples would have non-trivial nilpotent ideals. Until now it was unclear whether a finite dimensional semi-simple algebra could have a grading
which cannot be regraded by a finite group.

In Theorem~\ref{TheoremFiniteRegradingImpossible} below we show that
there exist even elementary gradings (see Definition~\ref{def:elementary}) on the full matrix algebras $M_n(F)$ (where $F$ is a field) that are not weakly equivalent to gradings by finite groups.
This suggests the following problem:

\begin{problem}\label{ProblemMinNElAGradNonFin} Determine
the set $\Omega$ of  
the numbers $n\in\mathbb N$ such that any elementary grading on $M_n(F)$ can be regraded by a finite group.
\end{problem}

In Section~\ref{SectionRegradingMatrixFinite} we prove the following theorem:

\begin{theorem}\label{TheoremRegradeElementaryOmega}
The set $\Omega$ defined in Problem~\ref{ProblemMinNElAGradNonFin}
is of the form $\lbrace n \in \mathbb N \mid 1 \leqslant n \leqslant n_0\rbrace$ for some $3\leqslant n_0 \leqslant 348$.
In particular, for every $n \geqslant 349$ there exists an elementary grading on $M_n(F)$ that cannot be regraded by any finite group.
\end{theorem}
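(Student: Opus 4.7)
The plan is to decompose the statement into three claims: the set $\Omega$ is downward closed in $\mathbb{N}$, the initial segment $\{1,2,3\}$ lies in $\Omega$, and $353 \notin \Omega$ (the last being Theorem~\ref{TheoremFiniteRegradingImpossible}, which the introduction advertises).

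For downward closure, I would take an arbitrary elementary grading $\Gamma$ on $M_{n-1}(F)$ defined by a tuple $(g_1,\dots,g_{n-1})$ and extend it to an elementary grading $\widetilde\Gamma$ on $M_n(F)$ by duplicating the first entry, i.e.\ using the tuple $(g_1,g_1,g_2,\dots,g_{n-1})$. The new matrix units $e_{in}$, $e_{nj}$, $e_{nn}$ acquire degrees $g_i g_1^{-1}$, $g_1 g_j^{-1}$, $e$, respectively, all of which already lie in $\supp \Gamma$; moreover, the only new multiplicative compatibilities $e_{in}e_{nj}=e_{ij}$ are already implied by $e_{i1}e_{1j}=e_{ij}$ in $\Gamma$. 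Hence $\supp \widetilde\Gamma = \supp \Gamma$ and the universal grading groups satisfy $U(\widetilde\Gamma) \cong U(\Gamma)$. Since regradability by a finite group depends only on the pair $(U, \supp)$ via the criterion advertised in the abstract (residual finiteness of $U$ with respect to $\supp$), the regradability of $\widetilde\Gamma$ granted by $n \in \Omega$ passes to $\Gamma$, so $n-1 \in \Omega$.

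For the inclusion $\{1,2,3\} \subseteq \Omega$ I would normalize $g_1 = e$. The case $n=1$ is trivial. For $n=2$ the universal group is a quotient of the infinite cyclic group $\langle g_2 \rangle$ and thus residually finite, with the finite support separated in a large enough finite cyclic quotient. For $n=3$ the support consists of at most the seven words $e,\, g_2^{\pm 1},\, g_3^{\pm 1},\, g_2 g_3^{-1},\, g_3 g_2^{-1}$; my plan would be to enumerate the admissible identifications among them (those compatible with matrix multiplication on $\{1,2,3\}^2$), producing a short list of universal groups --- the free group $F_2$, finitely generated abelian groups of rank at most $2$, and one-relator groups such as $\langle x,y \mid x^2 = y^2 \rangle$, all of them residually finite --- and then exhibit in each case a finite quotient that separates the (always finite) support.

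For the upper bound I would simply invoke Theorem~\ref{TheoremFiniteRegradingImpossible}, which produces an explicit elementary grading on $M_{353}(F)$ whose universal group fails to be residually finite with respect to its support and hence admits no weakly equivalent grading by a finite group. Combined with downward closure this gives $n_0 \leq 352$, and combined with the preceding step it gives $n_0 \geq 3$, yielding $\Omega = \{1,\dots,n_0\}$ with $3 \leq n_0 \leq 352$ as claimed. The main obstacle I anticipate is the enumeration in the $n=3$ case: verifying that every partition of $\{1,2,3\}^2$ compatible with matrix multiplication yields a universal group residually finite with respect to the resulting support is straightforward for any individual partition, but ensuring completeness of the case analysis --- and ideally replacing the bookkeeping by a structural statement such as an embedding of the universal group into a known residually finite family --- is where the delicate work lies.
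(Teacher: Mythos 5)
Your three-part decomposition --- downward closure of $\Omega$, the inclusion $\{1,2,3\}\subseteq\Omega$, and $353\notin\Omega$ --- is exactly the paper's decomposition (Corollary~\ref{CorollaryMatrixElemGreaterSize}, Theorem~\ref{TheoremFiniteRegradingSmallMatrix}, Corollary~\ref{CorollaryFiniteRegradingImpossible353}), so the overall architecture matches. Two remarks on the route. For downward closure you argue via the invariants $\supp$ and $G_\Gamma$: since duplicating an entry of the defining tuple leaves both the support and the universal group unchanged, regradability (which by Theorem~\ref{TheoremFinGradingsToGroupsTransition} depends only on these data) passes from $\widetilde\Gamma$ to $\Gamma$. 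The paper instead embeds $M_{n-1}(F)$ as a graded subalgebra of $M_n(F)$ and uses the plain observation (Lemma~\ref{LemmaGradedSubalgebra}) that a regrading of the ambient algebra restricts to a regrading of any graded subalgebra; this sidesteps the need to verify that the set of non-annihilating pairs of homogeneous components is also preserved, which your version tacitly uses when invoking $G_{\widetilde\Gamma}\cong G_\Gamma$. Both work; the paper's is slightly more economical. One small attribution fix: Theorem~\ref{TheoremFiniteRegradingImpossible} does not by itself produce $n=353$; the explicit bound comes from Corollary~\ref{CorollaryFiniteRegradingImpossible353}, which specializes that theorem to Thompson's group $G_{2,1}$ and sums the lengths of its defining relators.

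The genuine shortfall is the $n=3$ case, which you yourself flag as ``where the delicate work lies.'' This is the content of Theorem~\ref{TheoremFiniteRegradingSmallMatrix}, and it is the bulk of the lower bound $n_0\geqslant 3$. Your proposed route --- list the coincidence patterns among the off-diagonal entries of the degree matrix, compute the universal group for each, and check residual finiteness --- would succeed, because after reducing the abelian configurations by Proposition~\ref{prop:abelian} the only remaining relations are among $g^2=1$, $h^2=1$, $(gh)^2=1$, so the universal groups are free products of copies of $\mathbb Z$ and $\mathbb Z/2\mathbb Z$, all virtually free and hence residually finite (and the support is finite, so residual finiteness suffices). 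But as written the proposal asserts residual finiteness for an unverified list of ``one-relator groups,'' which is not true of one-relator groups in general, and it does not ensure that the enumeration of coincidence patterns is exhaustive. The paper avoids this subtlety altogether: instead of classifying universal groups, it uses Theorem~\ref{TheoremWeakEquivGradedSimple} to reduce weak equivalence of elementary gradings to matching the coincidence pattern of the degree matrix, then simply exhibits, for each of the eight non-abelian patterns, a concrete finite group ($\mathbb Z/6\mathbb Z$, $S_3$, $(\mathbb Z/2\mathbb Z)^2$, $S_4$) realizing that pattern. To turn your sketch into a proof you would need to complete the enumeration and verify residual finiteness case by case; the paper's constructive approach is the shorter path.
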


In Section~\ref{SectionWeakEquivalenceGrSimpleAlg} we provide a criterion for two group gradings on graded-simple algebras to be weakly equivalent
and give an example of two twisted group algebras of the same group
which are isomorphic as algebras, but whose standard gradings
are not weakly equivalent.

In Section~\ref{SectionGroupTheoreticalApproach} we recall the definition of the universal group of a grading  introduced
in 1989 by J.~Patera and H.~Zassenhaus~\cite{PZ89} and prove that any finitely presented group $G$ can be a universal group of an elementary grading $\Gamma$ on a full matrix algebra and moreover any finite subset of $G$ can be included in $\supp \Gamma$ (Theorem~\ref{TheoremGivenFinPresGroupExistence}). This result is used in the proof of Theorem~\ref{TheoremRegradeElementaryOmega}. We conclude the section showing that the question whether a given grading is regradable by a finite group and, in particular, Problem~\ref{ProblemMinNElAGradNonFin} can be reformulated in a purely group theoretical way (see Theorem~\ref{TheoremFinGradingsToGroupsTransition} and Problem~\ref{ProblemMinFnHRFRx1x2x3}).

In Section~\ref{SectionRegradingMatrixFinite} we prove Corollary~\ref{CorollaryFiniteRegradingImpossible349}, where we show that for every $n \geqslant 349$ there exists an elementary grading on $M_n(F)$ that cannot be regraded by any finite group, and Theorem~\ref{TheoremFiniteRegradingSmallMatrix}, where we show that 
if $n\leqslant 3$, then any elementary grading on $M_n(F)$ is weakly equivalent to an elementary grading by a finite group. Together this proves Theorem~\ref{TheoremRegradeElementaryOmega}.

\section{Preliminaries}
\label{SectionGrSimpleAlg}

In this section we recall some basic facts about graded algebras.

Let $S$ be a semigroup and let $\Gamma \colon A=\bigoplus_{s\in S} A^{(s)}$ be a grading. The subspaces $A^{(s)}$ are called \textit{homogeneous components} of $\Gamma$
and nonzero elements of $A^{(s)}$ are called \textit{homogeneous} with respect to~$\Gamma$. A subspace $V$ of  $A$ is \textit{graded} if $V=\bigoplus_{t\in S} (V \cap A^{(s)})$.
If $A$ does not contain graded two-sided ideals, i.e. two-sided ideals that are graded subspaces,
then $A$ is called \textit{graded-simple}.

\subsection{Lower dimensional group cohomology and graded division algebras}
\label{SubsectionCohomologyGradedDivisionAlgebras}

By the Bahturin~--- Seghal~--- Zaicev Theorem (Theorem~\ref{TheoremBahturinZaicevSeghal} below)
twisted group algebras, which are graded division algebras, play a crucial role in the classification of graded-simple algebras.

Let $A=\bigoplus_{g\in G} A^{(g)}$ be a $G$-graded algebra for some group $G$.
If for every $g\in G$ all nonzero elements of $A^{(g)}$ are invertible, then $A$ is called a \textit{graded division algebra}.

Finite dimensional $G$-graded division algebras over an algebraically closed field are described by the elements of the second cohomology groups of finite subgroups $H \subseteq G$ with coefficients in the multiplicative group of the base field.

Let $G$ be a group and let $F$ be a field. Denote by $F^{\times}$ the multiplicative group of $F$. Throughout the article we consider only trivial group actions on $F^{\times}$. In this case the \textit{first cohomology 
group} $H^1(G, F^{\times})$ is isomorphic to the group $Z^1(G, F^{\times})$
of \textit{$1$-cocycles} which in turn coincides with the group $\Hom(G,F^\times)$ of group homomorphisms $G \to F^\times$ with the pointwise multiplication.

Recall that a function $\sigma\colon G \times G \to F^{\times}$ is a \textit{$2$-cocycle} if $\sigma(u,v)\sigma(uv,w)=\sigma(u,vw)\sigma(v,w)$ for all $u,v,w\in G$. The set $Z^2(G, F^{\times})$ of $2$-cocycles is an abelian group with respect to the pointwise multiplication. 
The subgroup $B^2(G, F^{\times}) \subseteq Z^2(G, F^{\times})$
of \textit{$2$-coboundaries} consists of all $2$-cocycles $\sigma$
for which there exists a map $\tau \colon G \to F^\times$
such that we have $\sigma(g,h)=\tau(g)\tau(h)\tau(gh)^{-1}$
for all $g,h\in G$.
The factor group $H^2(G,F^\times) := Z^2(G, F^{\times})/B^2(G, F^{\times})$ 
is called the \textit{second cohomology group of $G$ with coefficients in $F^\times$}.
Denote by $[\sigma]$ the cohomology class of $\sigma \in Z^2(G, F^{\times})$
in $H^2(G,F^\times)$.

Let $\sigma \in Z^2(G, F^{\times})$.
The \textit{twisted group algebra} $F^\sigma G$ is the associative algebra
with the formal basis $(u_g)_{g\in G}$ and the multiplication $u_g u_h = \sigma(g,h)u_{gh}$
for all $g,h \in G$.
For \textit{trivial} $\sigma$, i.e. when $\sigma(g,h)=1$ for all $g,h\in G$, the twisted group algebra $F^\sigma G$ is the ordinary \textit{group algebra} $FG$. Each twisted group algebra $F^\sigma G$ has the \textit{standard grading} $F^{\sigma} G=\bigoplus_{g\in G}F^{\sigma} G^{(g)}$ where $F^{\sigma} G^{(g)}=Fu_g$. Two twisted group algebras $F^{\sigma_1} G$ and $F^{\sigma_2} G$
are graded isomorphic if and only if $[\sigma_1]=[\sigma_2]$. (See e.g.~\cite[Theorem 2.13]{ElduqueKochetov}.)

Note that twisted group algebras are graded division algebras. In fact,
the component $B^{(1_G)}$ of an arbitrary graded division algebra $B$, that corresponds to the neutral element $1_G$ of the grading group $G$, is an ordinary division algebra. Thus, if the base field $F$ is algebraically
closed and $\dim B$ is finite, we have $B^{(1_G)} = F1_B$ and $B \cong F^\sigma H$ for some finite subgroup $H \subseteq G$ and a $2$-cocycle $\sigma \in Z^2(H, F^{\times})$. (See~\cite[Theorem 2.13]{ElduqueKochetov} for the details.)

Suppose there exists
a homomorphism $\varphi \colon F^\sigma G
\to F$ of unital algebras. Then $$\varphi(u_g)\varphi(u_h)=\varphi(u_g u_h)
=\sigma(g,h)\varphi(u_{gh})$$ and
$\sigma(g,h)=\varphi(u_g)\varphi(u_h)\varphi(u_{gh})^{-1}$ for all $g,h\in G$, i.e. $\sigma$ is cohomologous to the trivial $2$-cocycle.
Consequently, if $[\sigma]$ is non-trivial, then $F^\sigma G$ does not have
one dimensional unital modules.

Recall that if $G$ is finite and $\ch F \nmid |G|$, then $F^\sigma G$ is semisimple. (The proof is completely analogous to the case of an ordinary group algebra, see e.g.~\cite[Theorem~1.4.1]{Herstein}.) Therefore, if $G$ is finite, $\ch F \nmid |G|$, and the field $F$ is algebraically closed,
the Artin--Wedderburn Theorem implies that $F^\sigma G$ is
isomorphic to the direct sum of full matrix algebras $M_k(F)$.
In the case $[\sigma]$ is non-trivial, the observation in the previous paragraph shows
 that $k\geqslant 2$ for all $M_k(F)$.
Unlike ordinary group algebras $FG$ of non-trivial groups, twisted group algebras $F^\sigma G$
can be simple. (See e.g.~\cite[Theorem 2.15]{ElduqueKochetov}.)

Let $G$ be an abelian group and $\sigma\in Z^2(G, F^{\times})$.
Then in $F^\sigma G$ we have $u_g u_h = \beta(g,h) u_h u_g$
where $\beta(g,h) := \sigma(g,h)\sigma(h,g)^{-1}$, $g,h \in G$,
is the alternating bicharacter corresponding
to $\sigma$. Recall that a function $\beta \colon G\times G \to F^\times$
is an \textit{alternating bicharacter} if it is multiplicative in each variable
and $\beta(g,g)=1_F$ for all $g\in G$.
It is easy to see that $\beta$ depends only on the cohomology class $[\sigma]\in H^2(G, F^{\times})$
and not on the particular $2$-cocycle $\sigma$.

If $G$ is a finitely generated abelian group, then $G \cong (\mathbb Z/n_1 \mathbb Z) \times (\mathbb Z/n_2 \mathbb Z)\times \dots \times (\mathbb Z/n_m \mathbb Z)$ for some non-negative integers $m, n_i$.
In this case, in order to define an alternating bicharacter $\beta \colon G\times G \to F^\times$, it is necessary and sufficient to define the values $\beta(g_i,g_j)$
where $$\beta(g_i,g_j)^{n_i}=\beta(g_i,g_j)^{n_j}=\beta(g_i,g_j)\beta(g_j,g_i)=\beta(g_i,g_i)=1$$
for all $1\leqslant i,j \leqslant m$
and $g_i$ are generators of the cyclic components of $G$.

Given an alternating bicharacter $\beta \colon G\times G \to F^\times$, it is easy to define an algebra which is graded isomorphic to a twisted group algebra $F^\sigma G$ with $[\sigma]$
corresponding to $\beta$:
$$u_{g_1^{k_1}\cdots g_m^{k_m}} u_{g_1^{\ell_1}\cdots g_m^{\ell_m}}
= \left(\prod_{1\leqslant i < j \leqslant m} \beta(g_j,g_i)^{k_j \ell_i}\right)u_{g_1^{k_1+\ell_1}\cdots g_m^{k_m+\ell_m}}.$$

Similar arguments show that if $\sigma_1,\sigma_2\in Z^2(G, F^{\times})$
have equal alternating bicharacters, then $F^{\sigma_1}G$
and $F^{\sigma_2}G$ are graded isomorphic, and $[\sigma_1]=[\sigma_2]$.

\subsection{Elementary gradings and classification of graded-simple algebras}
\label{SubsectionElementaryGradedSimpleAlgebras}

\begin{definition}\label{def:elementary}
Let $F$ be a field, $G$ be a group, let $n\in\mathbb N$, and let $(g_1, \dots, g_n)$ be an $n$-tuple of elements
of $G$. Define a grading on $M_n(F)$ by making each matrix unit $e_{ij}$ a $g_i g_j^{-1}$-homogeneous element. This grading is called the \textit{elementary $G$-grading} defined by $(g_1, \dots, g_n)$.
\end{definition}
\begin{remark}\label{RemarkElementary}
Note that such a grading is uniquely determined by defining the $G$-degrees of $e_{i,i+1}$, $1\leqslant i \leqslant n-1$. If $G$ is an arbitrary group and $(h_1, \dots, h_{n-1})$ is an arbitrary $(n-1)$-tuple of elements of $G$, then the elementary grading with $e_{i,i+1} \in M_n(F)^{(h_i)}$ can be defined by $(g_1, \dots, g_n)$ where $g_i = \prod_{j=i}^{n-1} h_j$.
\end{remark}

Let $n\in\mathbb N$, let $G$ be a group, let $\gamma=(g_1, \dots, g_n)$ where $g_i \in G$, let $H \subseteq G$ be a finite subgroup, and let $\sigma \in Z^2(H, F^{\times})$.
Denote by $M(\gamma, \sigma)$ the algebra $M_n(F)\otimes_F F^\sigma H$
endowed with the grading where $e_{ij}\otimes u_h$ belongs to the
$g_i h g_j^{-1}$-component.

Recall
the following classification result:

\begin{theorem}[Bahturin~--- Seghal~--- Zaicev, see e.g.~{\cite[Theorem~3]{BahturinZaicevSeghalSimpleGraded} or~\cite[Corollary 2.22]{ElduqueKochetov}}]
\label{TheoremBahturinZaicevSeghal} Let $A$ be a finite dimensional graded-simple $G$-graded algebra
over an algebraically closed field $F$ where $G$ is a group.
Then $A$ is graded isomorphic to $M(\gamma, \sigma)$ for some
$n\in\mathbb N$, $\gamma=(g_1, \dots, g_n)$ where $g_i \in G$, a finite subgroup $H \subseteq G$,
and a $2$-cocycle $\sigma \in Z^2(H, F^{\times})$.
\end{theorem}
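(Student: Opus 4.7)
The plan is to reproduce the two classical steps of the Bahturin--Seghal--Zaicev proof: first a graded Wedderburn--Artin decomposition, which reduces $A$ to a matrix algebra over a graded division algebra, and then an application of the classification of finite-dimensional graded division algebras over an algebraically closed field recalled in Subsection~\ref{SubsectionCohomologyGradedDivisionAlgebras}.

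For the first step, since $A$ is finite-dimensional and graded-simple, it admits a minimal graded right ideal $I$. A graded version of Schur's lemma shows that $D := \End_{A\text{-gr}}(I)^{\mathrm{op}}$, the algebra of degree-preserving right $A$-module endomorphisms with opposite multiplication, is a graded division algebra: kernel and image of a nonzero homogeneous endomorphism are graded, hence by minimality such an endomorphism is invertible. A graded Jacobson density argument, combined with $\dim A<\infty$ and graded-simplicity (which forces injectivity of the natural map $A\to\End_{D}(I)$), then yields a graded isomorphism $A\cong\End_D(I)$, where $I$ is regarded as a graded left $D$-module.

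For the second step, choose a homogeneous $D$-basis $v_1,\dots,v_n$ of $I$, say with $\deg v_i = g_i$, and set $\gamma=(g_1,\dots,g_n)$. Writing elements of $\End_D(I)$ as matrices in this basis identifies $\End_D(I)$ with $M_n(D)$, and tracing degrees shows that $e_{ij}\otimes d$ for homogeneous $d\in D^{(h)}$ is homogeneous of degree $g_i h g_j^{-1}$. Finally, since $F$ is algebraically closed and the neutral component $D^{(1_G)}$ is an ordinary finite-dimensional division $F$-algebra, we have $D^{(1_G)}=F\cdot 1_D$; thus the support $H:=\supp D$ is a finite subgroup of $G$, choosing a nonzero homogeneous element $u_h$ in each $D^{(h)}$ produces a basis with multiplication $u_g u_h = \sigma(g,h)u_{gh}$ for some $\sigma\in Z^2(H,F^\times)$, and this identifies $D$ with $F^\sigma H$ as a graded algebra. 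Combining the two identifications gives $A\cong M_n(F)\otimes_F F^\sigma H = M(\gamma,\sigma)$ with the asserted grading.

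The main obstacle is the graded Wedderburn--Artin step: one must verify that the natural action map $A\to\End_D(I)$ is not only an algebra isomorphism but preserves grading, which requires setting up the graded density/bicommutant theorem carefully and checking that homogeneous components match on both sides. The other potentially delicate point is the choice of the shift tuple $\gamma$; different choices of homogeneous bases of $I$ yield different but equivalent tuples (differing by a permutation and right multiplication by elements of $H$), so some care is needed to present the final data canonically, though this is not required for the statement as written.
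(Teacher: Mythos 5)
The paper does not prove this theorem; it cites it as a known classification result (Bahturin--Seghal--Zaicev, with references to \cite{BahturinZaicevSeghalSimpleGraded} and \cite[Corollary~2.22]{ElduqueKochetov}). Your two-step sketch --- a graded Wedderburn--Artin reduction to $\End_D(I)$ for a graded division algebra $D$, followed by the identification $D\cong F^\sigma H$ over an algebraically closed field --- is exactly the route taken in those references, so in spirit it matches what the paper is pointing to.

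One inaccuracy worth fixing: you describe $D$ as ``the algebra of degree-preserving right $A$-module endomorphisms.'' If $D$ were only the degree-preserving (i.e.\ degree-$1_G$) endomorphisms, it would be an ordinary division algebra concentrated in the neutral component, and the phrase ``nonzero homogeneous endomorphism'' would be vacuous. What you actually need (and implicitly use) is the full endomorphism ring $\End_A(I)$ graded by degree shift, i.e.\ $\End_A(I)=\bigoplus_{g\in G}\End_A(I)^{(g)}$ where $\End_A(I)^{(g)}$ consists of the $A$-linear maps sending each $I^{(h)}$ into $I^{(gh)}$; since $\dim I<\infty$ every $A$-endomorphism decomposes into such homogeneous pieces, and graded Schur then says each nonzero homogeneous piece is invertible, making this a graded division algebra with support a subgroup $H\subseteq G$. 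The rest of your argument (homogeneous $D$-basis of $I$, degree bookkeeping giving $g_ihg_j^{-1}$, and $D^{(1_G)}=F\cdot 1_D$ over the algebraically closed base field so that $D\cong F^\sigma H$) is correct and is the standard proof.
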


A criterion for two such gradings to be isomorphic can be found, e.g., in~\cite[Lemma 1.3, Proposition 3.1]{AljaHaile} or~\cite[Corollary 2.22]{ElduqueKochetov}.
Necessary and sufficient conditions for two graded-simple algebras to be graded equivalent were proven in~\cite[Theorem 2.20]{ginosargradings}. In the next section we study weak equivalences
of gradings on two graded-simple algebras.

\section{Weak equivalences of graded-simple algebras}
\label{SectionWeakEquivalenceGrSimpleAlg}

In this section we prove a criterion for a weak equivalence of graded-simple algebras
inspired by~\cite[Proposition~2.33]{ElduqueKochetov},
we present families of gradings for which the notions of equivalence and weak equivalence coincide,
and give an example of two twisted group algebras of the same abelian group that are  isomorphic as ordinary algebras, but not
graded weakly equivalent.

Let $A=\bigoplus_{g\in G} A^{(g)}$ be an algebra graded by a group $G$.
A vector space $W=\bigoplus_{g\in G} W^{(g)}$ is a \textit{graded left $A$-module} if
$W$ is a left $A$-module and for each $g,h\in G$
we have $A^{(g)}W^{(h)} \subseteq W^{(gh)}$. Graded right modules are defined analogously.

Fix some $n\in\mathbb N$, a group $G$, an $n$-tuple $\gamma=(g_1, \dots, g_n)$ where $g_i \in G$, a finite subgroup $H \subseteq G$, and $\sigma \in Z^2(H, F^{\times})$.

Consider the $G$-graded vector space $V$ with the basis $v_{i,h}$, $1\leqslant i \leqslant n$, $h\in H$, such that $v_{i,h}$ is a homogeneous element of degree $g_i h$.
Then $V$ is a graded left $M(\gamma, \sigma)$-module
and a graded right
$F^{\sigma}H$-module
 with $(e_{jk}\otimes u_g) v_{i,h} := \delta_{ki} \sigma(g,h) v_{j,gh}$
and $v_{i,h} u_g := \sigma(h,g) v_{i,hg}$
for all $1\leqslant i,j, k \leqslant n$ and $g,h \in H$
where $\delta_{ij}=\left\lbrace\begin{array}{rrr} 0 & \text{if} & i\ne j,\\ 1 & \text{if} & i=j \end{array}\right.$ and $(u_g)_{g\in G}$ is the standard basis in $F^{\sigma}H$.
Note that $(av)u = a (v u)$ for all $a\in M(\gamma, \sigma)$, $v\in V$, and $u\in F^{\sigma}H$.

\begin{lemma}\label{LemmaVIrrLeftModule}
$V$ is an irreducible graded left $M(\gamma, \sigma)$-module.
\end{lemma}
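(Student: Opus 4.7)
The plan is to show that any nonzero $M(\gamma,\sigma)$-submodule $U$ of $V$ equals $V$. The argument proceeds in three stages: normalizing $\sigma$, extracting from $U$ a single basis vector $v_{i_{0},h_{0}}$, and verifying that any such basis vector generates all of $V$ under the $M(\gamma,\sigma)$-action.

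First, by replacing $\sigma$ with a cohomologous normalized cocycle I may assume $\sigma(1_{H},h)=\sigma(h,1_{H})=1$ for every $h\in H$. With this normalization each $e_{ii}\otimes u_{1_{H}}$ is a homogeneous idempotent of $M(\gamma,\sigma)$, these idempotents are pairwise orthogonal and sum to the identity, and the associated Peirce decomposition of $V$ is $V=\bigoplus_{i=1}^{n}V_{i}$ where $V_{i}:=\mathrm{span}\{v_{ih}\mid h\in H\}$.

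Second, since $V$ is a $G$-graded module and $M(\gamma,\sigma)$ acts by $G$-graded operators, I take a nonzero homogeneous element $u\in U$ and choose $i_{0}$ with $(e_{i_{0}i_{0}}\otimes u_{1_{H}})u\neq 0$; the projection then lies in $U\cap V_{i_{0}}$ and is still homogeneous and nonzero. Because $h\mapsto g_{i_{0}}h$ is injective on $H$, the basis vectors $v_{i_{0},h}$ (for $h\in H$) have pairwise distinct grades, so each homogeneous component of $V_{i_{0}}$ is one-dimensional. The projection is therefore a nonzero scalar multiple of some single $v_{i_{0},h_{0}}$, and consequently $v_{i_{0},h_{0}}\in U$.

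Third, for every $j\in\{1,\ldots,n\}$ and every $g\in H$ the action gives
\[(e_{j,i_{0}}\otimes u_{g})v_{i_{0},h_{0}}=\sigma(g,h_{0})v_{j,gh_{0}}\in U.\]
Since $\sigma(g,h_{0})\in F^{\times}$ and the map $g\mapsto gh_{0}$ is a bijection of $H$, every basis vector $v_{j,h}$ of $V$ (for $1\leqslant j\leqslant n$, $h\in H$) is recovered inside $U$ up to a nonzero scalar. Hence $U=V$.

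The main obstacle is the second step: reducing from an arbitrary element of $U$ to a pure basis vector. This reduction rests on the $G$-graded structure of the setup: a graded submodule is spanned by homogeneous elements, and the distinctness of the grades $g_{i_{0}}h$ inside a single row $V_{i_{0}}$ then forces any such homogeneous element in $V_{i_{0}}$ to be a scalar multiple of some $v_{i_{0},h_{0}}$. The remaining computations are direct. I note that this graded reading of irreducibility is the natural one in context: under the identification $V\cong F^{n}\otimes_{F}F^{\sigma}H$ with the left regular action of $F^{\sigma}H$ on the second tensor factor, $V$ can admit proper nonzero ungraded submodules whenever $F^{\sigma}H$ fails to be a division algebra, so the statement is intended in the graded sense, in agreement with \cite[Proposition~2.33]{ElduqueKochetov}.
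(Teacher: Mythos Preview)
Your argument is correct and follows essentially the same route as the paper: project a nonzero element of a graded submodule into a single ``row'' $V_{i_0}$ via the idempotent $e_{i_0 i_0}\otimes u_{1_H}$, use that the $v_{i_0,h}$ lie in distinct graded components to isolate a single basis vector, and then act by $e_{j,i_0}\otimes u_g$ to reach every $v_{j,h}$. The only cosmetic differences are that you normalize $\sigma$ first and start from a homogeneous element, whereas the paper starts from an arbitrary element and invokes gradedness after projecting; your closing remark that irreducibility must be read in the graded sense (and can fail ungraded) is a useful clarification that the paper leaves implicit.
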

\begin{proof}
Suppose $W$ is a non-trivial graded $M(\gamma, \sigma)$-submodule of $V$.
Take non-zero $v=\bigoplus_{i,h} \alpha_{i,h} v_{i,h} \in W$. If $\alpha_{i,h_0}\ne 0$
for some $1\leqslant i \leqslant n$ and $h_0\in H_i$, then
  $e_{ii}\otimes u_{1_H} v= \bigoplus_{h \in H} \alpha_{i,h} v_{i,h} \in W$
  is again non-zero. Since $W$ is a graded subspace and for fixed $i$ and different $h$
  the elements $v_{i,h}$ belong to different graded components, we get
  $\alpha_{i, h_0} v_{i, h_0} \in W$ and $v_{i, h_0} \in W$. Hence
  $v_{j,h}= \frac{1}{\sigma(hh_0^{-1},h_0)}(e_{ji}\otimes u_{hh_{0}^{-1}})v_{i,h_0} \in W$ for all $1\leqslant j \leqslant n$, $h\in H$.
  Therefore $W=V$.
\end{proof}

  For a graded vector space $W=\bigoplus_{g\in G} W^{(g)}$ and an element $h\in G$
  denote by $W^{[h]}$ the same vector space endowed with the grading
  $W=\bigoplus_{g\in G} \tilde W^{(g)}$ where $\tilde W^{(g)} := W^{(gh^{-1})}$.

  Note that $M(\gamma, \sigma)$ is the direct sum of graded left ideals
  $I_j=\bigoplus_{\substack{h\in H, \\ 1\leqslant i \leqslant n}} F (e_{ij}\otimes u_h)$,
  $1\leqslant j \leqslant n$, and each $I_j$ is isomorphic to $V^{[g_j^{-1}]}$
  as a graded left $M(\gamma, \sigma)$-module via $e_{ij}\otimes h \mapsto v_{i,h}$,
  $1\leqslant i \leqslant n$, $h\in H$.
  
  Recall that if $\alpha \colon H_1 \to H_2$
  is a homomorphism of groups and $\sigma \in Z^2(H_2, F^{\times})$,
  then the function $\sigma(\alpha(g), \alpha(h))$ of arguments $g,h\in H_1$
  is a $2$-cocycle on $H_1$. We denote the cohomology class of this $2$-cocycle by 
  $[\sigma(\alpha(\cdot),\alpha(\cdot))]$.

  Let $G_1,G_2$ be groups and let $n_1,n_2\in\mathbb N$. Fix tuples $\gamma_i=(g_{i1}, \dots, g_{in_i})$
where $g_{ij} \in G_i$, finite subgroups $H_i \subseteq G_i$, and $2$-cocycles
$\sigma_i \in Z^2(H_i, F^{\times})$, $i=1,2$.
We say that the gradings on $M(\gamma_1, \sigma_1)$ and $M(\gamma_2, \sigma_2)$
satisfy Condition (*) if $n_1=n_2$, there exist a group isomorphism $\alpha \colon H_1 \mathrel{\widetilde{\to}} H_2$,
a permutation $\pi \in S_{n_1}$,
and elements $t_i \in H_2$, $1\leqslant i \leqslant n_1$,
such that $[\sigma_1] = [\sigma_2(\alpha(\cdot),\alpha(\cdot))]$
and the following condition holds:
for every $1\leqslant i,j,k,\ell \leqslant n_1$ and $h_1,h_2\in H_1$
we have $$g_{2,\pi(i)}t_i\alpha(h_1)t_j^{-1}g_{2,\pi(j)}^{-1} = g_{2,\pi(k)} t_k \alpha(h_2) t_\ell^{-1} g_{2,\pi(\ell)}^{-1}$$
if and only if
$$g_{1i}h_1 g_{1j}^{-1} = g_{1k} h_2 g_{1\ell}^{-1}.$$

 \begin{theorem}\label{TheoremWeakEquivGradedSimple}
 Let $G_1,G_2$ be groups and let $n_1,n_2\in\mathbb N$. Fix tuples $\gamma_i=(g_{i1}, \dots, g_{in_i})$
where $g_{ij} \in G_i$, finite subgroups $H_i \subseteq G_i$, and $2$-cocycles
$\sigma_i \in Z^2(H_i, F^{\times})$, $i=1,2$.
The gradings on $M(\gamma_1, \sigma_1)$ and $M(\gamma_2, \sigma_2)$
are weakly equivalent if and only if they satisfy Condition (*).
If $M(\gamma_1, \sigma_1)$ and $M(\gamma_2, \sigma_2)$ satisfy Condition (*), the algebra isomorphism
 $\varphi \colon M(\gamma_1, \sigma_1) \mathrel{\widetilde{\to}} M(\gamma_2, \sigma_2)$
 implementing this weak equivalence
can be defined e.g. by $\varphi(e_{ij}\otimes x_h) = e_{\pi(i),\pi(j)}\otimes y_{t_i\alpha(h)t_j^{-1}}$
where $(x_h)_{h\in H_1}$ is the formal basis in $F^{\sigma_1} H_1$ and $(y_t)_{t\in H_2}$ is the formal basis in $F^{\sigma_2} H_2$.
\end{theorem}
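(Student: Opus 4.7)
The plan is to treat the two directions separately, relying on Lemma~\ref{LemmaVIrrLeftModule} together with the observation that $M(\gamma,\sigma)$ decomposes as the direct sum of graded left ideals $I_j\cong V^{[g_j^{-1}]}$, so that the graded module structure essentially encodes the data $(\gamma,\sigma)$.

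For the ``if'' direction I would proceed by direct verification. Using the cohomology hypothesis $[\sigma_1]=[\sigma_2(\alpha(\cdot),\alpha(\cdot))]$, I would first pick a $\tau\colon H_1\to F^\times$ and rescale the basis $(x_h)_{h\in H_1}$ by $\tau$ so that $\sigma_1(h,h')=\sigma_2(\alpha(h),\alpha(h'))$ holds on the nose; analogously I would absorb into the basis $(y_t)_{t\in H_2}$ the $2$-cocycle factors arising from the shifts $t_i$. Once these normalisations are in place, checking that the formula $\varphi(e_{ij}\otimes x_h)=e_{\pi(i),\pi(j)}\otimes y_{t_i\alpha(h)t_j^{-1}}$ defines an algebra homomorphism reduces to the $2$-cocycle identity for $\sigma_2$, and bijectivity is immediate from the formula. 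The weak-equivalence property is precisely the compatibility clause in Condition~(*): it says that two basis monomials $e_{ij}\otimes x_{h_1}$ and $e_{k\ell}\otimes x_{h_2}$ lie in the same $\Gamma_1$-component if and only if their images lie in the same $\Gamma_2$-component.

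For the ``only if'' direction I would exploit the module $V$: by Lemma~\ref{LemmaVIrrLeftModule} together with the decomposition $M(\gamma_1,\sigma_1)=\bigoplus_j I_j$, the graded left module is essentially unique up to a degree shift. Given a weak equivalence $\varphi\colon M(\gamma_1,\sigma_1)\mathrel{\widetilde\to} M(\gamma_2,\sigma_2)$, the induced bijection $\supp\Gamma_1\to\supp\Gamma_2$ is partially multiplicative, which allows one to transport the grading of $V$ to a $G_2$-grading and hence to turn the pullback of $V$ along $\varphi^{-1}$ into an irreducible graded $M(\gamma_2,\sigma_2)$-module. Comparing this with the analogous module $V'$ built from $(\gamma_2,\sigma_2)$ immediately yields $n_1=n_2$ together with the permutation $\pi$ (matching the columns $I_j$ of $M(\gamma_1,\sigma_1)$ with those of $M(\gamma_2,\sigma_2)$). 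Matching the right $F^{\sigma_1}H_1$-action on $V$ with the right $F^{\sigma_2}H_2$-action on $V'$ then supplies the group isomorphism $\alpha\colon H_1\to H_2$, the shift elements $t_i\in H_2$, and the cohomology relation $[\sigma_1]=[\sigma_2(\alpha(\cdot),\alpha(\cdot))]$. Finally, the compatibility clause of Condition~(*) is simply the record that $\varphi$ sends $\Gamma_1$-components to $\Gamma_2$-components.

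The main obstacle I anticipate is in the ``only if'' direction: the weak equivalence a priori supplies only a set bijection of supports, so extracting a bona fide group isomorphism $\alpha$ together with consistent shifts $t_i$ requires using the multiplicative structure of $F^{\sigma_i}H_i$ and the tuples $\gamma_i$ in tandem, and keeping track of how the cocycles transform under the pullback is delicate. The Proposition~2.33 of Elduque--Kochetov mentioned in the preamble of this section, which treats the analogous graded-isomorphism statement, is the natural blueprint; the present situation differs by not requiring $\pi$ and the $t_i$ to be induced by a prescribed homomorphism $G_1\to G_2$, which is exactly why Condition~(*) looks more permissive than the classical isomorphism criterion.
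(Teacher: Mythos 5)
Your proposal follows essentially the same route as the paper: for the ``only if'' direction you invoke Lemma~\ref{LemmaVIrrLeftModule}, the decomposition $M(\gamma_1,\sigma_1)=\bigoplus_j I_j\cong\bigoplus_j V^{[g_j^{-1}]}$, the transport of the irreducible graded module along the isomorphism $\varphi$, and the identification $\End_{M(\gamma_i,\sigma_i)}V_i\cong F^{\sigma_i}H_i$ to extract $\alpha$, the $t_i$, and the cohomology relation --- precisely the paper's argument. The paper compresses the ``if'' direction to a single word (``trivial''), whereas you flesh out the cocycle bookkeeping (rescaling the bases so that the relation $[\sigma_1]=[\sigma_2(\alpha(\cdot),\alpha(\cdot))]$ holds on the nose before checking the explicit formula); this is a helpful expansion of what the paper leaves implicit rather than a different method.
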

\begin{proof} Suppose the gradings on $M(\gamma_1, \sigma_1)$ and $M(\gamma_2, \sigma_2)$
are weakly equivalent.
  Denote by $\varphi$ an isomorphism of algebras $M(\gamma_1, \sigma_1)\mathrel{\widetilde{\to}} M(\gamma_2, \sigma_2)$ that corresponds to the weak equivalence.
Construct the graded $M(\gamma_i, \sigma_i)$-modules $V_i$, $i=1,2$, as above.
These modules are irreducible by Lemma~\ref{LemmaVIrrLeftModule}.

Now we use the isomorphism $\varphi$ to define
on $V_2$ the structure of a left (non-graded) $M(\gamma_1, \sigma_1)$-module
via $a \cdot v= \varphi(a)v$ for $a\in M(\gamma_1, \sigma_1)$  and $v\in V_2$.
 Define for $M(\gamma_1, \sigma_1)$ the minimal $G_1$-graded left ideals $I_j$ as above. Then for each $j$ the space $\varphi(I_j)$ is a minimal $G_2$-graded 
left ideal of $M(\gamma_2, \sigma_2)$. For a homogeneous element $v\in V_2$
the spaces $I_j \cdot v$ are $G_2$-graded $M(\gamma_2, \sigma_2)$-submodules
of $V_2$.
Since by Lemma~\ref{LemmaVIrrLeftModule} the module
$V_2$ does not contain any non-zero proper $G_2$-graded $M(\gamma_2, \sigma_2)$-submodules, for every $j$ and every homogeneous $v\in V_2$
we have either $I_j \cdot v = 0$ or $I_j \cdot v = V_2$. Since $M(\gamma_1, \sigma_1) \cdot V_2 \ne 0$, we obtain that $V_2=I_j\cdot v$ for some homogeneous $v\in V_2$ and some $j$. 
Hence $V_2$ is isomorphic to $I_j$ as a (non-graded) left $M(\gamma_1, \sigma_1)$-module. Moreover, this isomorphism maps each nonzero $G_1$-graded component onto some
$G_2$-graded component. In fact, since $I_j \cong V_1^{[g_j^{-1}]}$,
there exists a linear isomorphism $\psi \colon V_1 \mathrel{\widetilde{\to}} V_2$
such that $\varphi(a)\psi(v)=\psi(av)$ for all $a\in M(\gamma_1, \sigma_1)$
and $v\in V_1$ and for each $g\in G_1$ with $V_1^{(g)}\ne 0$ there exists $\rho(g)\in G_2$
such that $\psi\left(V_1^{(g)}\right)=V_2^{(\rho(g))}$.

It is not difficult to check that $F^{\sigma_i} H_i$ 
is isomorphic to $\End_{M(\gamma_i, \sigma_i)} V_i$ as an algebra through its action on $V_i$ from the right. Hence there exists an algebra isomorphism $\tau \colon F^{\sigma_1} H_1 \mathrel{\widetilde{\to}} F^{\sigma_2} H_2$ such that $\psi(v)\tau(u)=\psi(vu)$ where $v\in V_1$ and $u\in F^{\sigma_1} H_1$.
 If $V_2^{(g)} \ne 0$
and $h\in H_1$, then \begin{equation}\label{EqWeakGrSmpl1}V_2^{(g)}\tau(x_h)=\psi\left(V_1^{\left(\rho^{-1}(g)\right)}\right)\tau(x_h)=
\psi\left(V_1^{\left(\rho^{-1}(g)\right)}x_h\right)=\psi\left(V_1^{\left(\rho^{-1}(g)h\right)}\right)=
V_2^{\left(\rho\left(\rho^{-1}(g)h\right)\right)}.\end{equation}
However, $\tau(x_h)=\sum_{t\in H_2} \alpha_t y_t$ for some $\alpha_t\in F$.
Since the sum $\bigoplus_{t\in H_2} V_2^{(g)}y_t = \bigoplus_{t\in H_2} V_2^{(gt)}$
is direct, \begin{equation}\label{EqWeakGrSmpl2}\tau(x_h)=\lambda(h) y_{\alpha(h)}\end{equation} for some $\lambda(h) \in F^{\times}$
and $\alpha(h) \in H_2$. Since $\tau$ is an algebra isomorphism, $\alpha$ is a group isomorphism.
Now \begin{equation*}\begin{split}\sigma_1(h_1, h_2)\lambda(h_1h_2)y_{\alpha(h_1 h_2)}
=\sigma_1(h_1, h_2)\tau(x_{h_1 h_2})=\\=\tau(x_{h_1})\tau(x_{h_2})=\sigma_2(\alpha(h_1),\alpha(h_2))
\lambda(h_1)\lambda(h_2)y_{\alpha(h_1) \alpha(h_2)},\end{split}\end{equation*}
implies $[\sigma_1] = [\sigma_2(\alpha(\cdot),\alpha(\cdot))]$.

Note that $\dim V_1 = n_1 |H_1|=\dim V_2 = n_1 |H_2|$. Now $H_1 \cong H_2$
implies $n_1 = n_2$.

Equalities~(\ref{EqWeakGrSmpl1}) and~(\ref{EqWeakGrSmpl2}) imply
$\rho\left(gh\right) = \rho(g)\alpha(h)$ for all $g\in G_1$ with $V_1^{(g)}\ne 0$
and all $h\in H_1$.
Hence $\rho$ is a bijection between $\bigcup_{i=1}^{n_1} g_{1i}H_1$ and $\bigcup_{i=1}^{n_1} g_{2i}H_2$
which maps left cosets of $H_1$ onto left cosets of $H_2$.
Since for each fixed $1\leqslant i \leqslant n_1$ and each $h\in H_1$
the number $$\dim\left(V_1^{(g_{1i} h)}\right)=\dim\left(V_2^{\bigl(\rho(g_{1i}) \alpha(h)\bigr)}\right)$$ equals the number of different $1\leqslant j \leqslant n_1$ such that $g_{1j} \in g_{1i}H_1$
and the number of different $1\leqslant j \leqslant n_1$ such that $g_{2j} \in \rho(g_{2i})H_2$,
there exists a permutation $\pi \in S_{n_1}$ and elements $t_i \in H_2$ such that $\rho(g_{1i})=g_{2,\pi(i)} t_i$, $1\leqslant i \leqslant n_1$.

Denote by $v_{i,h}$ the elements of the standard basis in $V_1$ defined before the theorem.
Now $\varphi(e_{ij}\otimes x_h) \psi(v_{j,1_{H_1}})=\sigma(h,1_{H_1})\psi(v_{i,h})$.
Note that $\deg \psi(v_{j,1_{H_1}}) = \rho(g_{1j})= g_{2, \pi(j)} t_j$ and
$\deg \psi(v_{i,h}) = \rho(g_{1i}h)= g_{2, \pi(i)}t_i\alpha(h)$.
Hence $\deg \varphi(e_{ij}\otimes x_h) = g_{2, \pi(i)}t_i\alpha(h) t_j^{-1} g_{2, \pi(j)}^{-1}$
for all $1\leqslant i,j\leqslant n_1$.
Since $\varphi$ is a weak equivalence of gradings and $\deg (e_{ij}\otimes x_h) = g_{1i}hg_{1j}^{-1}$,
we get the first part of the theorem.

The converse is trivial.
\end{proof}

The proposition below is verified directly.
\begin{proposition}
An elementary grading on a full matrix algebra can be weakly equivalent only to a
grading isomorphic to an elementary grading on a full matrix algebra.
\end{proposition}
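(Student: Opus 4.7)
The plan is to pull the grading $\Gamma_2$ back along the weak equivalence and show that the standard matrix units of $M_n(F)$ acquire degrees of the form $h_i h_j^{-1}$, thereby realising an elementary grading. Write $\Gamma_1$ for the given elementary $G$-grading on $A = M_n(F)$ defined by $(g_1,\dots,g_n)$, so $\deg_{\Gamma_1} e_{ij} = g_i g_j^{-1}$, and let $\varphi \colon A \mathrel{\widetilde\to} B$ realise the weak equivalence with $\Gamma_2 \colon B = \bigoplus_{t \in T} B^{(t)}$. Since $A = M_n(F)$, the algebra $B$ is itself a full matrix algebra, and $E_{ij} := \varphi(e_{ij})$ form a complete system of matrix units in $B$. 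By the definition of weak equivalence each $E_{ij}$ is $\Gamma_2$-homogeneous; denote $t_{ij} := \deg_{\Gamma_2} E_{ij}$.

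Two facts drive the argument. First, transporting the matrix-unit relations $e_{ij}e_{jk}=e_{ik}$ and $e_{ij}e_{ji}=e_{ii}$ and comparing homogeneous degrees of nonzero products yields $t_{ij}t_{jk}=t_{ik}$ and $t_{ij}t_{ji}=t_{ii}$. Second, all $e_{ii}$ lie in the single $\Gamma_1$-homogeneous component $A^{(1_G)}$, and weak equivalence maps this component \emph{onto} a single $\Gamma_2$-component; hence all $t_{ii}$ coincide with one element $t_0 \in T$. Combining these, $t_0$ acts as a two-sided identity on $\{t_{ij}\}$ and $t_{ji}$ is a two-sided inverse of $t_{ij}$, so the subsemigroup of $T$ generated by $\supp \Gamma_2 = \{t_{ij}\}$ is in fact a group $T_0$.

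Setting $h_i := t_{i1} \in T_0$, the derived relations give $h_i h_j^{-1} = t_{i1}t_{1j} = t_{ij}$, so the elementary $T_0$-grading on $M_n(F)$ defined by $(h_1,\dots,h_n)$ has precisely the degrees obtained by pulling $\Gamma_2$ back through $\varphi^{-1}$. Hence $\varphi^{-1}$ is a graded isomorphism between $\Gamma_2$ and an elementary grading on a full matrix algebra. The only non-trivial step I expect is the identification of all the $t_{ii}$ with a common element $t_0$; this is where the definition of weak equivalence (mapping entire homogeneous components onto homogeneous components, not merely into them) is used essentially, and where a pointwise-homogeneous-only version of the hypothesis would fail. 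A minor nuisance is the passage from the possibly larger ambient grading semigroup $T$ to its support-generated subgroup $T_0$, which is forced by the fact that elementary gradings are only defined over groups; apart from this bookkeeping the verification is routine.
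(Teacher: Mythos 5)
Your proof is correct, and since the paper offers no proof of this proposition (it is dismissed with ``verified directly''), your argument supplies exactly the routine verification the authors had in mind: transport the matrix units $e_{ij}$ along $\varphi$, read off degrees $t_{ij}$, use the relations $e_{ij}e_{jk}=e_{ik}$ and $e_{ij}e_{ji}=e_{ii}$ to force $t_{ij}t_{jk}=t_{ik}$, observe that all $t_{ii}$ equal a single $t_0$ because the $e_{ii}$ all sit in $A^{(1_G)}$, conclude that the support generates a group, and read off the elementary tuple $h_i=t_{i1}$. One small quibble with your commentary, not with the proof itself: the step identifying all $t_{ii}$ with a common $t_0$ would already follow from the ``into'' (pointwise-homogeneous) version of the hypothesis, since $e_{11}+\cdots+e_{nn}$ is homogeneous; where the ``onto components'' requirement is genuinely indispensable is in guaranteeing that the induced map $\supp\Gamma_1 \to \supp\Gamma_2$, $g_ig_j^{-1}\mapsto t_{ij}$, is a \emph{bijection}, so that distinct $\Gamma_1$-components land in distinct $\Gamma_2$-components and $\Gamma_2$ is genuinely isomorphic to (not merely a coarsening of) the elementary $T_0$-grading.
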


Let $G$ be a group. We say that a $G$-grading of an algebra $A$ is
\textit{connected} if the support of this grading generates the
group $G$. Recall that a grading $A=\bigoplus_{g \in G} A^{(g)}$
is called \textit{strong} if $A^{(g_1)}A^{(g_2)}=A^{(g_1g_2)}$ for
any $g_1,g_2\in G$ and a grading is called \textit{nondegenerate}
if the product of a finite number of non-zero homogeneous
components is again non-zero.

It is easy to see that if $A\ne 0$ is strongly graded, then $A^{(g)}\ne 0$ 
for at least one $g\in G$ and $A^{(gh^{-1})}A^{(h)}=A^{(g)}\ne 0$ implies $A^{(h)}\ne 0$ for all $h\in G$. In particular, a strong grading is connected.

Here we introduce the notion of
a strongly connected grading which is weaker than the notion
of a connected nondegenerate grading and a strong grading (in the case 
of a nonzero algebra).

\begin{definition}
A connected grading $\Gamma\colon A=\bigoplus_{g \in G} A^{(g)}$ is \textit{strongly connected} if $A^{(g)}A^{(h)}\ne 0$
for all $g,h\in \supp\Gamma$.
\end{definition}
\begin{lemma}
Weakly equivalent strongly connected gradings of finite
dimensional algebras are equivalent.
\end{lemma}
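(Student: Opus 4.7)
The plan is to show that the bijection between supports induced by the weak equivalence is in fact a group isomorphism of the two grading groups, which will immediately upgrade weak equivalence to equivalence in the sense of Definition~\ref{DefGinosarEquivalent}.

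Let $\Gamma_1 \colon A=\bigoplus_{g \in G} A^{(g)}$ and $\Gamma_2 \colon B=\bigoplus_{h \in H} B^{(h)}$ be strongly connected and weakly equivalent via an algebra isomorphism $\varphi \colon A \mathrel{\widetilde\to} B$, and let $f \colon \supp \Gamma_1 \mathrel{\widetilde\to} \supp \Gamma_2$ be the induced bijection of supports, characterized by $\varphi\bigl(A^{(g)}\bigr) = B^{(f(g))}$. First I would observe that $\supp \Gamma_1$ is closed under multiplication in $G$: for any $g_1, g_2 \in \supp \Gamma_1$, strong connectedness gives $0 \ne A^{(g_1)}A^{(g_2)} \subseteq A^{(g_1 g_2)}$, so $g_1 g_2 \in \supp \Gamma_1$. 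Thus $\supp \Gamma_1$ is a subsemigroup of $G$; since $\dim A < \infty$ it is finite, and any finite subsemigroup of a group is in fact a subgroup (one takes powers of any element and uses the pigeonhole principle to extract the identity and inverses). Because the grading is connected, $\supp \Gamma_1$ generates $G$, so in fact $\supp \Gamma_1 = G$, and in particular $G$ is finite. The same reasoning gives $\supp \Gamma_2 = H$, so $f$ is a bijection from $G$ to $H$.

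Next I would verify that $f$ is a group homomorphism. Given any $g_1, g_2 \in G$, compute the image $\varphi\bigl(A^{(g_1)}A^{(g_2)}\bigr)$ in two ways. Applying $\varphi$ componentwise gives
\[
\varphi\bigl(A^{(g_1)}A^{(g_2)}\bigr) = \varphi\bigl(A^{(g_1)}\bigr)\varphi\bigl(A^{(g_2)}\bigr) = B^{(f(g_1))}B^{(f(g_2))} \subseteq B^{(f(g_1)f(g_2))}.
\]
On the other hand $A^{(g_1)}A^{(g_2)} \subseteq A^{(g_1 g_2)}$, so $\varphi\bigl(A^{(g_1)}A^{(g_2)}\bigr) \subseteq B^{(f(g_1 g_2))}$. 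By strong connectedness $A^{(g_1)}A^{(g_2)}\ne 0$, and $\varphi$ is injective, so $\varphi\bigl(A^{(g_1)}A^{(g_2)}\bigr)$ is a nonzero subspace sitting simultaneously in the homogeneous components of degree $f(g_1)f(g_2)$ and $f(g_1 g_2)$. These two degrees must therefore coincide, i.e. $f(g_1 g_2) = f(g_1)f(g_2)$.

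Hence $f$ is a bijective group homomorphism, i.e. a group isomorphism $G \mathrel{\widetilde\to} H$, and the pair $(\varphi, f)$ witnesses that $\Gamma_1$ and $\Gamma_2$ are equivalent in the sense of Definition~\ref{DefGinosarEquivalent}. I do not expect any genuine obstacle: the only nontrivial ingredients are the standard fact that a finite subsemigroup of a group is a subgroup and the use of strong connectedness precisely to guarantee that $\varphi(A^{(g_1)}A^{(g_2)})$ is nonzero so that its degree in $H$ is uniquely determined.
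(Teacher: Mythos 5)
Your proof is correct and follows essentially the same route as the paper's: establish that strong connectedness forces $\supp\Gamma_1=G$ and $\supp\Gamma_2=H$ (finite subsemigroup of a group is a subgroup, then use connectedness), and then use nonvanishing of $A^{(g_1)}A^{(g_2)}$ to pin down the degree and conclude the support bijection is a group isomorphism. Your closure argument is slightly more direct than the paper's inductive phrasing, but the two proofs are substantively the same.
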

\begin{proof}
Let $\Gamma_1 \colon A=\bigoplus_{g \in G} A^{(g)}$ be a strongly
connected grading. We claim that $\supp\Gamma _1$ coincides with $G$ itself. 
Take arbitrary $g=\prod _{i=1}^r g_i^{k_i}$
where $g_1,g_2,\dots ,
g_r\in\supp\Gamma _1$ and $k_1,k_2,\dots ,k_r\in \mathbb{N}$. Now, using the strongly
connectedness condition, we get by induction on $r$ that $A^{(g)}\neq 0$ and therefore
$\supp\Gamma_1$ is closed under multiplication. 
 Since $A$ is finite dimensional,
$\supp\Gamma_1$ is finite subset of a group, which is closed under multiplication.
Hence  $\supp\Gamma_1$ is a group and $\supp\Gamma_1 = G$ since $\Gamma_1$ is connected.

Let $\Gamma_2 \colon B=\bigoplus_{h \in H} B^{(h)}$ be a strongly
connected grading which is weakly equivalent to $\Gamma _1$ with
the associated isomorphism $\varphi \colon A\rightarrow B$. By the
arguments above, $\supp\Gamma _2=H$. Therefore, the natural
bijection between the supports of the gradings is a map $\psi \colon G\rightarrow H$ between
the grading groups. We claim that $\psi $ is a
group isomorphism. Indeed, let $g,g'\in G$. Then $A^{(g)}$ and $A^{(g')}$ are
both non-zero. Suppose $\varphi\left(A^{(g)}\right)=B^{(h)}$, $\varphi\left(A^{(g')}\right)=B^{(h')}$. Then $\psi(g)=h$ and $\psi(g')=h'$. Since $\varphi$ is an isomorphism
of algebras, we have
$$\varphi \left(A^{(g)}A^{(g')}\right)=\varphi\left(A^{(g)}\right)\varphi\left(A^{(g')}\right)=B^{(h)}B^{(h')}\subseteq B^{(hh')}.$$
On the other hand,
$$\varphi\left(A^{(g)}A^{(g')}\right)\subseteq \varphi\left(A^{(gg')}\right).$$
Consequently, since by the strongly connectedness condition
$A^{(g)}A^{(g')}\neq 0$, we get $\varphi\left(A^{(gg')}\right)=B^{(hh')}$ and
therefore $\psi(gg')=hh'$. Hence $\psi$ is indeed a group isomorphism. We
conclude by noticing that
$$\varphi\left(A^{(g)}\right)=B^{\bigl(\psi(g)\bigr)}$$
and therefore $\Gamma _1$ and $\Gamma _2$ are equivalent.
\end{proof}
\begin{corollary}\label{cor:eqvsweakeq}
In the following cases the weak equivalence of gradings of finite
dimensional algebras implies the equivalence of gradings:
\begin{enumerate}
\item the standard gradings on twisted group algebras;
 \item strong gradings;
 \item nondegenerate gradings.
\end{enumerate}
\end{corollary}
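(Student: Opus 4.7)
The plan is to reduce each of the three cases to the preceding lemma by verifying that the grading (after harmless adjustments) is strongly connected. Recall that the excerpt observed $\Gamma \colon A=\bigoplus_{g \in G} A^{(g)}$ is always equivalent to its restriction to the subgroup generated by $\supp \Gamma$, so throughout we may assume the grading is connected without loss of generality. Under this reduction it then suffices to check that $A^{(g)}A^{(h)} \ne 0$ for every $g,h \in \supp \Gamma$.

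For case (1), the standard grading on a twisted group algebra $F^\sigma G$ has one-dimensional components $Fu_g$, and the defining relation $u_g u_h = \sigma(g,h)u_{gh}$ with $\sigma(g,h) \in F^\times$ shows that every product of homogeneous components is nonzero. Thus the support is all of $G$ (so the grading is automatically connected) and the grading is strongly connected. For case (2), strongness means $A^{(g_1)}A^{(g_2)} = A^{(g_1 g_2)}$; the paragraph preceding the definition of strongly connected already establishes that if any component is nonzero then every component is nonzero, so $\supp \Gamma = G$ and for arbitrary $g,h\in G$ we have $A^{(g)}A^{(h)} = A^{(gh)} \ne 0$. For case (3), nondegeneracy directly says that the product of any finite number of nonzero homogeneous components is nonzero, so for any $g,h\in \supp \Gamma$ we have $A^{(g)}A^{(h)}\ne 0$; moreover $A^{(g)}A^{(h)} \subseteq A^{(gh)}$, so $gh \in \supp \Gamma$, and finite-dimensionality forces $\supp \Gamma$ to be a finite subsemigroup of a group, hence a subgroup, so after passing to the subgroup generated by the support the grading is connected and strongly connected.

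With strongly connectedness established in all three cases, an application of the preceding lemma yields the desired conclusion: any weakly equivalent grading is actually equivalent. No step is a real obstacle here; the only mild care needed is the observation that weak equivalence is transported correctly under the equivalence which restricts the grading group to the subgroup generated by the support, so that we may invoke the lemma on a connected model of the grading without losing any generality.
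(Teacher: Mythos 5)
Your proof is correct and follows exactly the approach the paper intends: the corollary is stated immediately after the lemma on strongly connected gradings precisely because each of the three cases reduces to it, and your verifications that the gradings are strongly connected (after passing to the subgroup generated by the support where necessary) are exactly the implicit checks the paper expects the reader to carry out.
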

Based on Corollary~\ref{cor:eqvsweakeq}, it is natural to ask when an
isomorphism of twisted group rings implies a graded equivalence of the
twisted group rings. It is clear that two isomorphic twisted group
rings may be not graded equivalent. A simple example for that is
the group algebras $\mathbb{C}C_4$ and $\mathbb{C}(C_2\times
C_2)$ (by $C_n$ we denote the cyclic group of order $n$). However, can this phenomenon happen for two twisted group
algebras of the same group? It turns out that, provided the group
$G$ is abelian, if $\mathbb{C}^{\sigma}G$ and $\mathbb{C}^{\rho}G$ are
isomorphic and simple, then they are graded equivalent~\cite[Theorem~18]{AljaHaileNatapov}, \cite[Proposition~2.4 (2)]{ginosargradings} (see the description of finite abelian groups of central type e.g. in~\cite[Theorem 2.15]{ElduqueKochetov}). Nonetheless,
for a non-abelian group $G$ it can happen that
$\mathbb{C}^{\sigma}G$ and $\mathbb{C}^{\rho}G$ are isomorphic
and simple, but they are not graded equivalent. Rather
complicated examples for that can be found in~\cite[\S
3.5]{ginosargradings}. However, if we relax the simplicity condition
to the graded simplicity (twisted group algebras are always graded-simple), examples even for abelian
groups can be constructed and they are much simpler than those in~\cite[\S
3.5]{ginosargradings}.
\begin{example}\label{ExampleGinosar}
Let
$$G=C_4\times C_2\times C_2=\langle x\rangle \times \langle y\rangle \times \langle z\rangle.$$

Recall that in order to define a cohomology class for a finitely generated abelian group, it is enough to determine the values of the corresponding alternating bicharacter (see Section~\ref{SubsectionCohomologyGradedDivisionAlgebras}). Define two non-cohomologous classes $[\sigma],[\rho] \in H^2(G,\mathbb{C}^\times)$
as follows:
$$[\sigma]:\ \alpha(x,y)=-1,\ \alpha(x,z)=1,\ \alpha(y,z)=1,$$
$$[\rho]:\ \beta(x,y)=1,\ \beta(x,z)=1,\ \beta(y,z)=-1.$$
Here $\alpha$ and $\beta$ are the alternating bicharacters
corresponding to $[\sigma]$ and $[\rho]$, 
respectively. 

Let
$$\mathbb{C}^{\sigma}G=\bigoplus_{i=1}^k M_{m_i}(\mathbb{C}), \quad \mathbb{C}^{\rho}G=\bigoplus_{j=1}^t M_{n_j}(\mathbb{C})$$
be the corresponding Artin--Wedderburn decompositions. Since
$[\sigma]$ and $[\rho]$ are nontrivial, we have $m_i,n_j>1$ for any
$1\leqslant i \leqslant k$, $1\leqslant j \leqslant t$. On the other hand, 
since the centers of both $\mathbb{C}^{\sigma}G$ and $\mathbb{C}^{\rho}G$
 have dimensions greater than $1$, by a simple
calculation, we get
$$\mathbb{C}^{\sigma}G\cong \mathbb{C}^{\rho}G\cong \bigoplus_{i=1}^4 M_2(\mathbb{C}).$$
However, there is a homogenous element of order $4$ in the center
of $\mathbb{C}^{\rho}G$ while there is no such homogeneous element
in the center of $\mathbb{C}^{\sigma}G$.
 Therefore, these algebras are not graded equivalent and
hence by Corollary~\ref{cor:eqvsweakeq} they are also not weakly
equivalent.
\end{example}

\section{Group-theoretical approach}
\label{SectionGroupTheoreticalApproach}

Each group grading on an algebra can be realized as a $G$-grading for many different groups $G$, however it turns out that there is one distinguished group among them \cite[Definition~1.17]{ElduqueKochetov}, \cite{PZ89}.

\begin{definition}\label{def:universal}
Let $\Gamma$ be a group grading on an algebra $A$. Suppose that $\Gamma$ admits a realization
as a $G_\Gamma$-grading for some group $G_\Gamma$. Denote by $\varkappa_\Gamma$ the corresponding embedding
$\supp \Gamma \hookrightarrow G_\Gamma$. We say that $(G_\Gamma,\varkappa_\Gamma)$ is the \textit{universal group of the grading $\Gamma$} if for any realization of $\Gamma$ as a grading by a group $G$
with $\psi \colon \supp \Gamma \hookrightarrow G$ there exists
a unique homomorphism $\varphi \colon G_\Gamma \to G$ such that the following diagram is commutative:
$$\xymatrix{ \supp \Gamma \ar[r]^\varkappa \ar[rd]^\psi & G_\Gamma \ar@{-->}[d]^\varphi \\
& G
}
$$
\end{definition}

Given a set $X$, denote by $\mathcal F(X)$ the free group with the set $X$ of free generators.
It is easy to see that if $G$ is a group and $\Gamma \colon A = \bigoplus_{g\in G} A^{(g)}$ is a grading, then $$G_\Gamma \cong \mathcal F([\supp \Gamma])/N$$ where $[\supp \Gamma]:=\lbrace [g] \mid g\in \supp \Gamma \rbrace$ and $N$ is the normal closure of the words $[g][h][gh]^{-1}$ for $g,h \in \supp \Gamma$
such that $A^{(g)}A^{(h)}\ne 0$.

The observation below is a direct consequence of the definition.

\begin{proposition}
A grading $\Gamma$ admits an infinite group turning $\Gamma$ into a connected grading if and only if $G_\Gamma$ is infinite.
\end{proposition}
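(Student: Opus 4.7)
The plan is to deduce both directions directly from the universal property in Definition~\ref{def:universal}. The key preliminary observation I will need is that the universal group $G_\Gamma$ itself realizes $\Gamma$ as a connected grading, i.e.\ that $\varkappa_\Gamma(\supp \Gamma)$ generates $G_\Gamma$. Once this is in hand, both directions are almost immediate.

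To establish the preliminary observation, I would let $H$ be the subgroup of $G_\Gamma$ generated by $\varkappa_\Gamma(\supp \Gamma)$ and regard $\Gamma$ as a grading by $H$ via the corestriction of $\varkappa_\Gamma$ to $H$. Applying the universal property to this $H$-realization yields a unique homomorphism $\varphi \colon G_\Gamma \to H$ compatible with the two embeddings of $\supp \Gamma$. Composing with the inclusion $\iota \colon H \hookrightarrow G_\Gamma$ produces an endomorphism of $G_\Gamma$ that fits the universal diagram for the tautological $G_\Gamma$-realization; but $\id_{G_\Gamma}$ does too. The uniqueness clause in Definition~\ref{def:universal} forces $\iota \circ \varphi = \id_{G_\Gamma}$, so $\iota$ is surjective and $H = G_\Gamma$.

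For the ``if'' direction, assuming $G_\Gamma$ is infinite, the preceding paragraph shows that $G_\Gamma$ itself is an infinite group that turns $\Gamma$ into a connected grading, which is all we need. For the ``only if'' direction, suppose $\Gamma$ admits a realization as a connected grading by some infinite group $G$ with embedding $\psi \colon \supp\Gamma \hookrightarrow G$. Invoking the universal property, I obtain a homomorphism $\varphi \colon G_\Gamma \to G$ with $\varphi \circ \varkappa_\Gamma = \psi$. Since $\psi(\supp \Gamma)$ generates $G$ by connectedness, the image of $\varphi$ contains a generating set of $G$, so $\varphi$ is surjective. Consequently $|G_\Gamma| \geqslant |G|$, forcing $G_\Gamma$ to be infinite.

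I do not anticipate any real obstacle: the statement is essentially a translation of the universal property, and the only subtlety is the auxiliary lemma that the universal group is itself a connected realization of $\Gamma$. That lemma in turn rests only on the uniqueness clause in Definition~\ref{def:universal}, via the standard retraction-argument sketched above.
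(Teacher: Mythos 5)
Your proof is correct. The paper offers no argument at all for this proposition (it is declared a ``direct consequence of the definition''), so your write-up is a genuine elaboration rather than a paraphrase. The key auxiliary fact you isolate---that $\varkappa_\Gamma(\supp\Gamma)$ generates $G_\Gamma$---is exactly what is needed, and your retraction argument via the uniqueness clause of the universal property is clean and self-contained. The paper implicitly relies on the same fact but justifies it later and differently, via the explicit presentation $G_\Gamma \cong \mathcal{F}(\supp\Gamma)/N$ (stated after Theorem~\ref{TheoremGivenFinPresGroupExistence}), from which generation by the image of the support is immediate; your approach has the advantage of not depending on that concrete construction and working directly from Definition~\ref{def:universal}. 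Both directions of the equivalence are then handled correctly: the ``if'' direction by taking $G_\Gamma$ itself as the witness, and the ``only if'' direction by observing that connectedness forces the induced homomorphism $G_\Gamma \to G$ to be surjective.
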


In the definition above the universal group of a grading $\Gamma$ is a pair $(G_\Gamma,\varkappa_\Gamma)$.
Theorem~\ref{TheoremGivenFinPresGroupExistence} below shows, in particular, that the first component of this pair can be an arbitrary finitely presented group.
Furthermore, we can choose $\Gamma$ to be an elementary grading on a full matrix algebra. The possibility to include a subset $V$ to the support will be used later, in the proof of Theorem~\ref{TheoremFiniteRegradingImpossible}.

\begin{theorem}\label{TheoremGivenFinPresGroupExistence}
Let $F$ be a field, let $G$ be a finitely presented
group and let $V \subseteq G$ be a finite subset (possibly empty). Then for some $n\in\mathbb N$, depending only on the presentation of $G$ and the elements of $V$, there exists an elementary grading $\Gamma$ on $M_n(F)$ such that $G_\Gamma \cong G$ and $V \subseteq \supp \Gamma$.
\end{theorem}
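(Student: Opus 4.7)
The plan is to produce an explicit tuple $(g_1,\ldots,g_n)\in G^n$ so that the elementary grading $\Gamma$ it defines on $M_n(F)$ satisfies $V\subseteq\supp\Gamma$ and $G_\Gamma\cong G$. Fix a finite presentation $G=\langle x_1,\ldots,x_k\mid r_1,\ldots,r_m\rangle$ and, for each $v_j\in V$, a word $w_j$ in the $x_i^{\pm 1}$ with $v_j=w_j$ in $G$. Set $g_1:=1_G$, $g_{1+i}:=x_i$ for $1\leqslant i\leqslant k$, and $g_{k+1+j}:=v_j$ for $1\leqslant j\leqslant\ell$; these positions already force the $x_i$ and the $v_j$ into $\supp\Gamma$ as differences of the form $g_a g_1^{-1}$. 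For each relator $r_\alpha=y_{\alpha,1}y_{\alpha,2}\cdots y_{\alpha,p_\alpha}$ with syllables $y_{\alpha,s}\in\{x_1^{\pm 1},\ldots,x_k^{\pm 1}\}$, append $p_\alpha-1$ fresh positions $Q_{\alpha,1},\ldots,Q_{\alpha,p_\alpha-1}$ with values $b_{\alpha,s}:=(y_{\alpha,1}\cdots y_{\alpha,s})^{-1}\in G$; formally set $Q_{\alpha,0}=Q_{\alpha,p_\alpha}:=1$, which is consistent since $b_{\alpha,p_\alpha}=r_\alpha^{-1}=1=g_1$. A direct computation gives the key identity $g_{Q_{\alpha,s-1}}g_{Q_{\alpha,s}}^{-1}=y_{\alpha,s}$ for every $s$. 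Analogously, add positions that enforce each auxiliary relation $v_j^{-1}w_j=1$. The resulting $n$ depends only on the presentation of $G$ and on the word lengths of the $w_j$.

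\textbf{The isomorphism $G_\Gamma\cong G$.} The universal property applied to our $G$-realization yields a canonical surjection $\pi\colon G_\Gamma\twoheadrightarrow G$ whose image contains the generators $x_i$. To invert it, define $\sigma\colon G\to G_\Gamma$ on generators by $\sigma(x_i):=\overline{x_i}\in G_\Gamma$, the class of the support element $x_i$. Well-definedness of $\sigma$ reduces to $r_\alpha(\overline{x_1},\ldots,\overline{x_k})=1$ for each $\alpha$: the matrix identity $e_{Q_{\alpha,0},Q_{\alpha,1}}e_{Q_{\alpha,1},Q_{\alpha,2}}\cdots e_{Q_{\alpha,p_\alpha-1},Q_{\alpha,p_\alpha}}=e_{1,1}$ forces, in $G_\Gamma$, the product of the degrees of the factors to equal $1$; each factor's degree is by the key identity the support element $y_{\alpha,s}=x_{i_{\alpha,s}}^{\epsilon_{\alpha,s}}$, which equals $\overline{x_{i_{\alpha,s}}}^{\epsilon_{\alpha,s}}$ in $G_\Gamma$, so the matrix identity rewrites as $r_\alpha(\overline{x_1},\ldots,\overline{x_k})=1$. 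Then $\pi\circ\sigma=\id_G$ is immediate.

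\textbf{Surjectivity of $\sigma$ and the main obstacle.} To conclude that $\pi$ is an isomorphism it remains to show that $\overline{x_1},\ldots,\overline{x_k}$ generate $G_\Gamma$; since $G_\Gamma$ is generated by $\supp\Gamma$, it suffices to express each $\overline{g_a g_1^{-1}}$ as a word in the $\overline{x_i}$. For $a$ among the $Q_{\alpha,s}$ one telescopes the identifications $\overline{g_{Q_{\alpha,s-1}}g_{Q_{\alpha,s}}^{-1}}=\overline{x_{i_{\alpha,s}}}^{\epsilon_{\alpha,s}}$ starting from $Q_{\alpha,0}=1$ to get the explicit formula $\overline{g_{Q_{\alpha,s}}g_1^{-1}}=(\overline{x_{i_{\alpha,1}}}^{\epsilon_{\alpha,1}}\cdots\overline{x_{i_{\alpha,s}}}^{\epsilon_{\alpha,s}})^{-1}$; the auxiliary $v_j$-cycles do the same for the $v_j$-indices. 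The main obstacle is just bookkeeping with the word ordering: one has to pick $b_{\alpha,s}=(y_{\alpha,1}\cdots y_{\alpha,s})^{-1}$ (with the inverse applied to the whole product) precisely so that the consecutive difference $g_{Q_{\alpha,s-1}}g_{Q_{\alpha,s}}^{-1}$ is the syllable $y_{\alpha,s}$ itself rather than its inverse or a conjugate of it; with any other convention the cycle would force a distorted version of $r_\alpha$ in $G_\Gamma$. Accidental extra coincidences $g_ag_b^{-1}=g_cg_d^{-1}$ in $G$ among the constructed positions are harmless: being consequences of the $r_\alpha$'s, the identifications they impose in $G_\Gamma$ are automatically already in force once the $r_\alpha$ hold there.
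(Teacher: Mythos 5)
Your proof is correct, and the core strategy — encode a finite presentation of $G$ into an elementary grading by placing the letters of the relators (and of the words representing the elements of $V$) as degrees of matrix units, then read off $G_\Gamma\cong G$ from matrix-unit identities — is the same as in the paper. The two proofs differ in packaging. For the construction, the paper lays out all the letters of $w_1,\dots,w_{m+s}$ and the generators $x_1,\dots,x_\ell$ along one long chain of consecutive positions, specifying $\deg e_{r,r+1}$ for $1\leqslant r\leqslant n-1$; you instead attach several cycles all passing through a common base position with value $1_G$, which is equivalent but leads to slightly different book-keeping (and a slightly different $n$). For the isomorphism $G_\Gamma\cong G$, the paper verifies the universal property directly: given any realization of $\Gamma$ by a group $H$ with support embedding $\psi$, it shows the elements $\psi(\bar x_i)$ satisfy the relations of $G$, obtains $\varphi\colon G\to H$, and proves $\varphi|_{\supp\Gamma}=\psi$ by induction on $|i-j|$. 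You instead work with the canonical surjection $\pi\colon G_\Gamma\twoheadrightarrow G$ and build a two-sided inverse $\sigma$ on generators, checking that the relators are killed in $G_\Gamma$ by the cycle products and that $\sigma$ is onto by telescoping through the base position. These are equivalent ways of establishing the universal property; yours is a bit more explicit computationally, while the paper's avoids ever invoking the presentation of $G_\Gamma$ as $\mathcal F(\supp\Gamma)/N$. One small remark: your final sentence about ``accidental coincidences'' being consequences of the $r_\alpha$'s is, as stated, circular (that is exactly what injectivity of $\pi$ asserts), but it is also superfluous — your main argument ($\pi\sigma=\id_G$ together with surjectivity of $\sigma$) does not rely on it.
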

\begin{proof}
Suppose $G \cong \mathcal F(X)/N$ where $X=\lbrace x_1, x_2, \dots, x_\ell\rbrace$ is a finite set of generators and $N$ is the normal closure of a finite set of words $w_1, \dots, w_m$.
Let $V=\lbrace g_1, \dots, g_s\rbrace \subseteq G$.
Choose $w_{m+1},\ldots, w_{m+s}\in \mathcal F(X)$
such that $g_i=\bar w_{m+i}$, $1\leqslant i \leqslant s$, where by $\bar u$ we denote the image of $u\in \mathcal F(X)$ in $G$. Suppose $w_i = y_{i1}\cdots y_{ik_i}$ where $y_{ij}\in X \cup X^{-1}$, $1\leqslant i \leqslant m+s$.

Without loss of generality, we may assume that the first $\ell_0$
generators $x_1, \ldots, x_{\ell_0}$, where $0 \leqslant \ell_0 \leqslant \ell$, do not occur among $y_{ij}$ and $y_{ij}^{-1}$,
and for each $\ell_0 < k \leqslant \ell$ there
exist $i,j$ such that either $y_{ij}=x_k$ or $y_{ij}=x_k^{-1}$.
 Denote $n=\ell_0+1+\sum_{i=1}^{m+s} k_i$.

Let $$\Gamma \colon M_n(F)=\bigoplus_{g\in G} M_n(F)^{(g)}$$ be the elementary $G$-grading
defined as follows: $$e_{r,r+1} \in M_n(F)^{(\bar y_{ij})} \text{ if } r=k_1+\dots+k_{i-1}+j,
\ 1\leqslant j \leqslant k_i,\ 1\leqslant i \leqslant m+s$$ and $$e_{r,r+1} \in M_n(F)^{(\bar x_j)}\text{ if   }r=\sum_{i=1}^{m+s} k_i+j,\ 1\leqslant j \leqslant \ell_0$$
(the corresponding elementary grading exists by Remark~\ref{RemarkElementary}).

Note that $$e_{\sum_{i=1}^{m+r-1} k_i+1, \sum_{i=1}^{m+r} k_i+1}
= \prod_{j=1}^{k_{m+r}} e_{\sum_{i=1}^{m+r-1} k_i + j,\sum_{i=1}^{m+r-1} k_i + j+1}  \in M_n(F)^{\left(\prod_{j=1}^{k_{m+r}}\bar y_{m+r,j}\right)}=M_n(F)^{(g_r)}$$
 is nonzero. Hence $g_r \in \supp \Gamma$ for each $1\leqslant r \leqslant s$
 and $V\subseteq \supp \Gamma$. 

Now we claim that $G_\Gamma \cong G$.

Suppose that $\Gamma$ is realized as a grading by a group $H$.
Then there exists an injective map $\psi \colon \supp \Gamma \hookrightarrow H$
defined by $M_n(F)^{(g)}\subseteq M_n(F)^{(\psi(g))}$.
We have \begin{equation}\label{EqPsiPartialHom} \psi(g_1 g_2)=\psi(g_1)\psi(g_2) \end{equation}
 for any $g_1, g_2 \in G$ such that $M_n(F)^{(g_1)}M_n(F)^{(g_2)}\ne 0$.
Since $M_n(F)$ is a unital algebra, we have $\psi(1_G)=1_H$.

For every $x\in X$ we have
$\bar x, \bar x^{-1}\in\supp\Gamma$. Thus elements $\psi(\bar x)$ and $\psi(\bar x^{-1})=\psi(\bar x)^{-1}$ are defined for all $x\in X$.
By induction, $$\psi(\bar y_{i1})\cdots \psi(\bar y_{ik_i})=\psi(\bar y_{i1}\cdots \bar y_{ik_i})=\psi(\bar w_i)=1_H$$
for all $1\leqslant i \leqslant m$.
Hence the elements $\psi(\bar x)$ satisfy the relations of $G$.
Therefore there exists a homomorphism $\varphi \colon G \to H$ such that $\varphi(\bar x) = \psi(\bar x)$ for each $x\in X$. Since the set $\lbrace \bar x \mid x \in X\rbrace$ generates $G$, such a homomorphism is unique.

Now we have to prove that $\varphi \bigr|_{\supp \Gamma} = \psi$.
Every element $g$ of $\supp \Gamma$ corresponds to a matrix unit
$e_{ij}$ where either $i > j$ and $e_{ij}=e_{i, i-1} e_{i-1,i-2}\cdots e_{j+1,j}$, or $i < j$ and $e_{ij}=e_{i, i+1} e_{i+1,i+2}\cdots e_{j-1,j}$,
or $i=j$ and $g=1_G$. Since for every $1\leqslant i,j \leqslant n$, such that
$|i-j|=1$, we have $e_{ij} \in M_n(F)^{(\bar x)}$ for some $x \in X \cup X^{-1}$ and $\varphi(x)=\psi(x)$ for $x \in X \cup X^{-1}$,
the induction on $|i-j|$ using~(\ref{EqPsiPartialHom}) shows that $\varphi(g)=\psi(g)$.
 Hence $G \cong G_\Gamma$.
\end{proof}

\begin{remark} For each grading $\Gamma$ one can define a category $\mathcal C_\Gamma$
where the objects are all pairs $(G,\psi)$ such that $G$ is a group and $\Gamma$ can be realized
as a $G$-grading with $\psi \colon \supp \Gamma \hookrightarrow G$ being the embedding of the support.
In this category the set of morphisms between $(G_1,\psi_1)$ and $(G_2,\psi_2)$ consists
of all group homomorphisms $f \colon G_1 \to G_2$ such that the diagram below is commutative:
$$\xymatrix{ \supp \Gamma \ar[r]^{\psi_1} \ar[rd]^{\psi_2} & G_1 \ar[d]^f \\
& G_2
}
$$
Then $(G_\Gamma,\varkappa_\Gamma)$ is the initial object of $\mathcal C_\Gamma$.
\end{remark}

\begin{definition}
Let $\Gamma_1 \colon A=\bigoplus_{g \in G} A^{(g)}$
and $\Gamma_2 \colon A=\bigoplus_{h \in H} A^{(h)}$ be two gradings
where $G$ and $H$ are groups and $A$ is an algebra.
We say that $\Gamma_2$ is \textit{coarser} than $\Gamma_1$ if
 for every $g\in G$ with $A^{(g)}\ne 0$ there exists $h\in H$
such that $A^{(g)}\subseteq A^{(h)}$. In this case $\Gamma_2$ is  called a \textit{coarsening} of $\Gamma_1$ and $\Gamma_1$ is called a \textit{refinement} of $\Gamma_2$.
Denote by $\pi_{\Gamma_1 \to \Gamma_2} \colon G_{\Gamma_1} \twoheadrightarrow G_{\Gamma_2}$
the homomorphism defined by $\pi_{\Gamma_1 \to \Gamma_2}(\varkappa_{\Gamma_1}(g))=\varkappa_{\Gamma_2}(h)$
for $g\in\supp\Gamma_1$ and $h\in\supp \Gamma_2$ such that $A^{(g)} \subseteq A^{(h)}$.
\end{definition}

\begin{notation}
For a subset $U$ of a group $G$, denote by $\diff U := \lbrace uv^{-1} \mid u,v\in U,\ u\ne v\rbrace$.
\end{notation}

\begin{lemma}\label{LemmaCoarseningSubset} 
Let $\Gamma_2 \colon A=\bigoplus_{h \in H} A^{(h)}$ be a coarsening of $\Gamma_1 \colon A=\bigoplus_{g \in G} A^{(g)}$.
Let \begin{equation*}\begin{split}W := \lbrace \varkappa_{\Gamma_1}(g_1) \varkappa_{\Gamma_1}(g_2)^{-1} \mid
g_1,g_2 \in \supp \Gamma_1\text{ such that } \\ A^{(g_1)}\oplus A^{(g_2)} \subseteq A^{(h)}\text{ for some }h\in \supp \Gamma_2\rbrace.\end{split}\end{equation*}
Denote by $Q \triangleleft  G_{\Gamma_1}$ the normal closure of $W$.
 Then $\ker \pi_{\Gamma_1 \to \Gamma_2} = Q$. In addition, $Q \cap \diff\varkappa_{\Gamma_1}(\supp \Gamma_1)= W$.
\end{lemma}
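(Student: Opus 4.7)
The plan is to prove both assertions in coordination, since the second one is what unlocks the harder direction of the first. I start with the easy inclusion $Q\subseteq \ker\pi_{\Gamma_1\to\Gamma_2}$: every generator $\varkappa_{\Gamma_1}(g_1)\varkappa_{\Gamma_1}(g_2)^{-1}\in W$ with $A^{(g_1)}\oplus A^{(g_2)}\subseteq A^{(h)}$ is sent by $\pi_{\Gamma_1\to\Gamma_2}$ to $\varkappa_{\Gamma_2}(h)\varkappa_{\Gamma_2}(h)^{-1}=1_{G_{\Gamma_2}}$, so $W\subseteq\ker\pi_{\Gamma_1\to\Gamma_2}$, and normality forces $Q\subseteq\ker\pi_{\Gamma_1\to\Gamma_2}$.

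Next I obtain the assertion on differences. A direct computation gives $\ker\pi_{\Gamma_1\to\Gamma_2}\cap\diff\varkappa_{\Gamma_1}(\supp\Gamma_1)=W$: an element $\varkappa_{\Gamma_1}(g_1)\varkappa_{\Gamma_1}(g_2)^{-1}$ with $g_1\ne g_2$ is in the kernel iff $\varkappa_{\Gamma_2}(h_1)=\varkappa_{\Gamma_2}(h_2)$, where $A^{(g_i)}\subseteq A^{(h_i)}$; by injectivity of $\varkappa_{\Gamma_2}$ (built into the universal group) this reduces to $h_1=h_2$, exactly the condition for membership in $W$. Since $W\subseteq Q\subseteq\ker\pi_{\Gamma_1\to\Gamma_2}$, intersecting with $\diff\varkappa_{\Gamma_1}(\supp\Gamma_1)$ gives $W=Q\cap\diff\varkappa_{\Gamma_1}(\supp\Gamma_1)$, which is the second assertion.

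For the remaining inclusion $\ker\pi_{\Gamma_1\to\Gamma_2}\subseteq Q$, I invoke the universal property of $G_{\Gamma_2}$. Let $q\colon G_{\Gamma_1}\twoheadrightarrow G_{\Gamma_1}/Q$ be the natural projection and define $\iota\colon\supp\Gamma_2\to G_{\Gamma_1}/Q$ by $\iota(h):=q(\varkappa_{\Gamma_1}(g))$ for any $g\in\supp\Gamma_1$ with $A^{(g)}\subseteq A^{(h)}$. Well-definedness uses $W\subseteq Q$; injectivity follows from the second assertion (an equality $\iota(h_1)=\iota(h_2)$ with $h_1\ne h_2$ would produce an element of $Q\cap\diff\varkappa_{\Gamma_1}(\supp\Gamma_1)\setminus W$). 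Using the presentation $G_{\Gamma_1}\cong \mathcal{F}(\supp\Gamma_1)/N$ recalled just before the lemma, I verify that $\iota$ is multiplicative on products of nonzero homogeneous components, so $\iota$ realizes $\Gamma_2$ as a $G_{\Gamma_1}/Q$-grading. The universal property then yields $\varphi\colon G_{\Gamma_2}\to G_{\Gamma_1}/Q$ with $\varphi\circ\varkappa_{\Gamma_2}=\iota$. The induced $\tilde\pi\colon G_{\Gamma_1}/Q\to G_{\Gamma_2}$ and $\varphi$ are mutually inverse on the generating sets $q(\varkappa_{\Gamma_1}(\supp\Gamma_1))$ and $\varkappa_{\Gamma_2}(\supp\Gamma_2)$; hence $\tilde\pi$ is an isomorphism and $\ker\pi_{\Gamma_1\to\Gamma_2}=Q$.

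The main obstacle is ensuring $\iota$ qualifies as a realization of $\Gamma_2$. Injectivity genuinely depends on having the second assertion in hand first, and multiplicativity requires unwinding the defining relations of $G_{\Gamma_1}$: given $h_1,h_2\in\supp\Gamma_2$ with $A^{(h_1)}A^{(h_2)}\ne 0$, one must pick $g_i\in\supp\Gamma_1$ with $A^{(g_i)}\subseteq A^{(h_i)}$ and $A^{(g_1)}A^{(g_2)}\ne 0$, then use the relation $\varkappa_{\Gamma_1}(g_1)\varkappa_{\Gamma_1}(g_2)=\varkappa_{\Gamma_1}(g_1g_2)$ in $G_{\Gamma_1}$ together with $A^{(g_1g_2)}\subseteq A^{(h_1h_2)}$.
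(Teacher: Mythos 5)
Your proof is correct, but it takes a genuinely different route from the paper's. The paper proves $\ker\pi_{\Gamma_1\to\Gamma_2}\subseteq Q$ by working directly with presentations: it introduces the surjection $\varphi\colon\mathcal F(\supp\Gamma_1)\twoheadrightarrow\mathcal F(\supp\Gamma_2)$, identifies $\ker\varphi$ as the normal closure of the elements $g_1g_2^{-1}$ with $A^{(g_1)}\oplus A^{(g_2)}\subseteq A^{(h)}$, and then, given $w$ with $\pi_{\Gamma_1\to\Gamma_2}\pi_{\Gamma_1}(w)=1$, pushes $w$ into $\ker\pi_{\Gamma_2}$ and factors $w=w_0w_1$ with $w_0\in\ker\varphi$, $w_1\in\ker\pi_{\Gamma_1}$, whence $\pi_{\Gamma_1}(w)\in Q$. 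The second assertion then falls out afterward. You instead invert the logical order: the observation $\ker\pi_{\Gamma_1\to\Gamma_2}\cap\diff\varkappa_{\Gamma_1}(\supp\Gamma_1)=W$ (which the paper also treats as obvious, but only uses at the very end) combined with the easy chain $W\subseteq Q\subseteq\ker\pi_{\Gamma_1\to\Gamma_2}$ gives the second assertion immediately, and that in turn supplies the injectivity of $\iota\colon\supp\Gamma_2\to G_{\Gamma_1}/Q$ needed to show that $G_{\Gamma_1}/Q$ realizes $\Gamma_2$. The universal property of $G_{\Gamma_2}$ then produces the inverse to the induced map $\tilde\pi\colon G_{\Gamma_1}/Q\to G_{\Gamma_2}$. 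What each buys: the paper's argument is a self-contained manipulation with free groups that never needs to check that $G_{\Gamma_1}/Q$ is a legitimate grading group for $\Gamma_2$; yours is more conceptual and makes transparent why the second assertion is not an afterthought but the very condition that prevents $G_{\Gamma_1}/Q$ from gluing distinct support elements. Your approach is also closer in spirit to the paper's proof of the companion Lemma on constructing coarsenings (LemmaSubsetCoarseningExists), where the same universal-property device appears, so in a sense you have unified the two proofs. One small caveat worth making explicit if you wrote this up: the multiplicativity check for $\iota$ relies on being able to choose, for any $h_1,h_2\in\supp\Gamma_2$ with $A^{(h_1)}A^{(h_2)}\ne0$, refined components $g_1,g_2\in\supp\Gamma_1$ with $A^{(g_1)}A^{(g_2)}\ne 0$ (distributing the product over the direct-sum decomposition of each $A^{(h_i)}$), and on the fact that for a coarsening each nonzero $A^{(g)}$ lies in exactly one $A^{(h)}$, so that $A^{(g_1g_2)}\subseteq A^{(h_1h_2)}$ follows from $A^{(g_1g_2)}\cap A^{(h_1h_2)}\supseteq A^{(g_1)}A^{(g_2)}\ne0$; you gesture at this but it deserves a full sentence.
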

\begin{proof}
Obviously, $Q\subseteq \ker \pi_{\Gamma_1 \to \Gamma_2}$.

Let $\pi_{\Gamma_1} \colon \mathcal F([\supp \Gamma_1]) \twoheadrightarrow G_{\Gamma_1}$
and $\pi_{\Gamma_2} \colon \mathcal F([\supp \Gamma_2]) \twoheadrightarrow G_{\Gamma_2}$
be the natural surjective homomorphisms. Denote by $\varphi \colon \mathcal F([\supp \Gamma_1]) \twoheadrightarrow \mathcal F([\supp \Gamma_2])$ the surjective homomorphism defined by $\varphi(g) = h$ for $g\in\supp\Gamma_1$ and $h\in\supp \Gamma_2$ such that $A^{(g)} \subseteq A^{(h)}$.
Then the following diagram is commutative:
\begin{equation}\label{EqCommDiagGamma1Gamma2}\xymatrix{ \mathcal F([\supp \Gamma_1]) \ar[r]^(0.65){\pi_{\Gamma_1}} \ar[d]^{\varphi} & G_{\Gamma_1} \ar[d]^{\pi_{\Gamma_1 \to \Gamma_2}}\\ \mathcal F([\supp \Gamma_2])
\ar[r]^(0.65){\pi_{\Gamma_2}}  & G_{\Gamma_2}}\end{equation}

Note that $\ker \varphi$ coincides with the normal closure in $\mathcal F([\supp \Gamma_1])$
of all elements $[g_1][g_2]^{-1}$ where $g_1,g_2 \in \supp \Gamma_1$ and $A^{(g_1)}\oplus A^{(g_2)} \subseteq A^{(h)}$ for some $h\in \supp \Gamma_2$. Hence $\pi_{\Gamma_1}(\ker\varphi) = Q$.

Suppose $\pi_{\Gamma_1 \to \Gamma_2} \pi_{\Gamma_1}(w) = 1_{G_{\Gamma_2}}$ for some $w\in \mathcal F([\supp \Gamma_1])$. Then~(\ref{EqCommDiagGamma1Gamma2}) implies $\varphi(w)\in \ker \pi_{\Gamma_2}$.
Therefore $\varphi(w)$ belongs to the normal closure of the words
$[h_1][h_2] [h_1 h_2]^{-1}$ for $h_1, h_2 \in \supp \Gamma_2$ with $A^{(h_1)}A^{(h_2)}\ne 0$.
However the last inclusion holds if and only if $A^{(g_1)}A^{(g_2)}\ne 0$
for some $g_1, g_2 \in \supp \Gamma_1$ such that $A^{(g_1)}\subseteq A^{(h_1)}$, $ A^{(g_2)}\subseteq A^{(h_2)}$.
Hence we can rewrite $w=w_0 w_1$ where $w_0 \in \ker\varphi$ and $w_1 \in \ker \pi_{\Gamma_1}$.
In particular, $\pi_{\Gamma_1}(w) = \pi_{\Gamma_1}(w_0)\in Q$. Since $\pi_{\Gamma_1}$
is surjective, we get $\ker \pi_{\Gamma_1 \to \Gamma_2} = Q$.
Together with the obvious equality
 $\ker \pi_{\Gamma_1 \to \Gamma_2} \cap \diff\varkappa_{\Gamma_1}(\supp \Gamma_1)= W$ this implies
 $Q \cap \diff\varkappa_{\Gamma_1}(\supp \Gamma_1)= W$.
\end{proof}

\begin{lemma}\label{LemmaSubsetCoarseningExists} Let $\Gamma_1 \colon A=\bigoplus_{g \in G} A^{(g)}$ be a grading by a group $G$.
Then for each subset $W \subseteq \diff\varkappa_{\Gamma_1}(\supp \Gamma_1)$
there exists a coarsening $\Gamma_2 \colon A=\bigoplus_{h \in H} A^{(h)}$ of $\Gamma_1$
such that $\ker\pi_{\Gamma_1 \to \Gamma_2} = Q$
where $Q$ is the normal closure of $W$ in $G_{\Gamma_1}$.
\end{lemma}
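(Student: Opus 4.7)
The natural strategy is to build $\Gamma_2$ explicitly and then read off $\ker\pi_{\Gamma_1\to\Gamma_2}$ from Lemma~\ref{LemmaCoarseningSubset}. First I would realize $\Gamma_1$ as a grading by its own universal group $G_{\Gamma_1}$ and identify $\supp\Gamma_1$ with its image under $\varkappa_{\Gamma_1}$ inside $G_{\Gamma_1}$. Set $H := G_{\Gamma_1}/Q$ and let $\pi \colon G_{\Gamma_1} \twoheadrightarrow H$ be the canonical projection. Define an $H$-grading on $A$ by
\[
\Gamma_2 \colon A = \bigoplus_{h\in H} A^{(h)}, \qquad A^{(h)} := \bigoplus_{\substack{g \in \supp\Gamma_1 \\ \pi(g) = h}} A^{(g)}.
\]
Because $\pi$ is a group homomorphism, the inclusions $A^{(g_1)}A^{(g_2)} \subseteq A^{(g_1 g_2)}$ from $\Gamma_1$ immediately give $A^{(h_1)}A^{(h_2)} \subseteq A^{(h_1 h_2)}$ in $\Gamma_2$, so this is indeed an $H$-grading, and manifestly a coarsening of $\Gamma_1$.

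Next I would apply Lemma~\ref{LemmaCoarseningSubset} to this coarsening. Let $W^\star$ denote the set that plays the role of $W$ in that lemma, namely
\[
W^\star := \bigl\{ g_1 g_2^{-1} \bigm| g_1, g_2 \in \supp\Gamma_1,\ A^{(g_1)} \oplus A^{(g_2)} \subseteq A^{(h)} \text{ for some } h \in \supp\Gamma_2 \bigr\}.
\]
By the very definition of $\Gamma_2$, two components $A^{(g_1)}$ and $A^{(g_2)}$ of $\Gamma_1$ lie in a common $\Gamma_2$-component if and only if $\pi(g_1) = \pi(g_2)$, i.e.\ if and only if $g_1 g_2^{-1} \in Q$. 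Hence $W^\star = Q \cap \diff\varkappa_{\Gamma_1}(\supp\Gamma_1)$, and Lemma~\ref{LemmaCoarseningSubset} identifies $\ker\pi_{\Gamma_1\to\Gamma_2}$ with the normal closure of $W^\star$ in $G_{\Gamma_1}$.

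It remains only to verify that this normal closure coincides with $Q$. Since $W^\star \subseteq Q$, the normal closure of $W^\star$ is contained in $Q$. Conversely, the hypothesis $W \subseteq \diff\varkappa_{\Gamma_1}(\supp\Gamma_1)$ together with the trivial inclusion $W \subseteq Q$ yields $W \subseteq W^\star$, so the normal closure of $W^\star$ contains the normal closure $Q$ of $W$. I do not anticipate any genuine obstacle: the argument reduces to a direct invocation of Lemma~\ref{LemmaCoarseningSubset}, and the only point requiring a moment's care is checking that the proposed componentwise combination really is a grading, which is immediate from $\pi$ being a homomorphism.
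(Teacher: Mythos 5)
Your proof is correct, and it takes a genuinely different and more economical route than the paper. The paper constructs the same coarsening $\Gamma_2 \colon A = \bigoplus_{u\in G_{\Gamma_1}/Q} A^{(u)}$, but then re-proves the universal property from scratch: it introduces $\pi_{\Gamma_1}\colon \mathcal F(\supp\Gamma_1)\twoheadrightarrow G_{\Gamma_1}$, shows that for any realization of $\Gamma_2$ by a group $H$ the induced map $\mathcal F(\supp\Gamma_1)\to H$ kills $\ker(\pi\circ\pi_{\Gamma_1})$, and deduces by uniqueness that $G_{\Gamma_1}/Q$ is the universal group of $\Gamma_2$ with $\pi_{\Gamma_1\to\Gamma_2}$ identified with $\pi$. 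You instead invoke the already-proved Lemma~\ref{LemmaCoarseningSubset}: for your $\Gamma_2$, the merging set $W^\star$ is exactly $Q\cap\diff\varkappa_{\Gamma_1}(\supp\Gamma_1)$, and since $W\subseteq Q$ and (by hypothesis) $W\subseteq \diff\varkappa_{\Gamma_1}(\supp\Gamma_1)$, you get $W\subseteq W^\star\subseteq Q$, so the normal closures of $W$ and $W^\star$ coincide with $Q$; Lemma~\ref{LemmaCoarseningSubset} then gives $\ker\pi_{\Gamma_1\to\Gamma_2} = Q$ directly. This avoids redoing the free-group/universal-property bookkeeping, and it makes visible that Lemma~\ref{LemmaSubsetCoarseningExists} is essentially a corollary of Lemma~\ref{LemmaCoarseningSubset} once the candidate coarsening is written down. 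The paper's self-contained argument has the minor virtue of not depending on the preceding lemma and of explicitly exhibiting $G_{\Gamma_1}/Q$ as the universal group, but that extra information follows anyway from $\ker\pi_{\Gamma_1\to\Gamma_2}=Q$ together with surjectivity of $\pi_{\Gamma_1\to\Gamma_2}$.
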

\begin{proof}
Let $\pi_{\Gamma_1} \colon \mathcal F([\supp \Gamma_1]) \twoheadrightarrow G_{\Gamma_1}$ and $\pi \colon G_{\Gamma_1} \twoheadrightarrow G_{\Gamma_1}/Q$
be the natural surjective homomorphisms. Consider the grading $\Gamma_2 \colon A=\bigoplus_{u \in G_{\Gamma_1}/Q} A^{(u)}$ where $A^{(u)} := \bigoplus_{\substack{g\in \supp \Gamma_1,\\ \pi(\varkappa_{\Gamma_1}(g))=u}} A^{(g)}$. 
We claim that $G_{\Gamma_1}/Q$ is the universal group of the grading $\Gamma_2$.
If $\Gamma_2$
can be realized as a grading by a group $H$ 
and $\psi \colon \supp\Gamma_2\hookrightarrow H$ is the corresponding embedding of the support,
then there exists a unique homomorphism $\varphi \colon \mathcal F([\supp \Gamma_1]) \to H$
such that $\varphi([g])=\psi(\pi(\varkappa_{\Gamma_1}(g)))$
for all $g\in \supp \Gamma_1$.
Note that $\ker (\pi\pi_{\Gamma_1})$ is the normal closure in $\mathcal F([\supp \Gamma_1])$ of:
\begin{enumerate}
\item the words $[g][h][gh]^{-1}$ for all  $g,h \in \supp \Gamma_1$
such that $A^{(g)}A^{(h)}\ne 0$;
\item the words $[g_1] [g_2]^{-1}$ for all $\varkappa_{\Gamma_1}(g_1) \varkappa_{\Gamma_1}(g_2)^{-1} \in W$.
\end{enumerate}
Hence $\ker (\pi\pi_{\Gamma_1}) \subseteq \ker \varphi$ and
there exists a homomorphism $\bar\varphi \colon G_{\Gamma_1}/Q \to H$ such that
$\bar \varphi\pi\pi_{\Gamma_1} = \varphi$. In particular, $\bar\varphi(u)=\psi(u)$ for all $u\in \supp \Gamma_2$.
Since $G_{\Gamma_1}/Q$ is generated by $\supp \Gamma_2$, the homomorphism $G_{\Gamma_1}/Q \to H$
with this property is unique, $G_{\Gamma_1}/Q$ is the universal group of the grading $\Gamma_2$
and $\pi_{\Gamma_1 \to \Gamma_2}$ can be identified with $\pi$.
\end{proof}
\begin{remark}
Note that the inclusion $W \subseteq Q \cap \diff\varkappa_{\Gamma_1}(\supp \Gamma_1)$
in Lemma~\ref{LemmaSubsetCoarseningExists}
can be strict.
\end{remark}

\begin{definition}\label{def:resfinite}
Let $G$ be a group and let $W$ be a subset of $G$. We say that $G$ is \textit{residually finite with respect to $W$} if there exists a normal subgroup $N \triangleleft G$ of finite index
such that $W \cap N = \varnothing$.
We say that $G$ is \textit{hereditarily residually finite with respect to $W$} if
for the normal closure $N_1 \triangleleft G$ of any subset of $W$
there exists a normal subgroup $N \triangleleft G$ of finite index
such that $N_1 \subseteq N$ and $W \cap N = W \cap N_1$.
\end{definition}

Theorem~\ref{TheoremFinGradingsToGroupsTransition}
below shows that the problem of whether a grading and its coarsenings can be regraded by a finite group can be viewed completely group theoretically.

\begin{theorem}\label{TheoremFinGradingsToGroupsTransition} Let $G$ be a group, $A$ be an algebra, and let $\Gamma \colon A = \bigoplus_{g\in G} A^{(g)}$ be a $G$-grading on $A$.
Then \begin{enumerate}
\item $\Gamma$ is weakly equivalent to a grading by a finite group if and only if
$G_\Gamma$ is residually finite with respect to $\diff\varkappa_\Gamma(\supp \Gamma)$;
\item $\Gamma$ and all its coarsenings are weakly equivalent to a grading by a finite group if and only if
$G_\Gamma$ is hereditarily residually finite with respect to $\diff\varkappa_\Gamma(\supp \Gamma)$.
\end{enumerate}
\end{theorem}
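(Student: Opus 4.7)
The plan is to prove (1) first by using the universal property of $G_\Gamma$ to translate ``weakly equivalent to a grading by a finite group'' into a statement about finite quotients of $G_\Gamma$, and then to bootstrap from (1) to (2) via Lemmas~\ref{LemmaCoarseningSubset} and~\ref{LemmaSubsetCoarseningExists}, which set up a dictionary between coarsenings of $\Gamma$ and subsets of $W := \diff \varkappa_\Gamma(\supp \Gamma)$.

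For (1), the remark after Definition~\ref{def:weakly} reduces the problem of weakly regrading $\Gamma$ by a finite group $H$ to the question of realizing $\Gamma$ as an $H$-grading on the same algebra with the same nonzero graded components; equivalently, one needs an injection $\psi \colon \supp \Gamma \hookrightarrow H$ compatible with the partial multiplication inherited from the relations $A^{(g_1)} A^{(g_2)} \subseteq A^{(g_1 g_2)}$. By the universal property (Definition~\ref{def:universal}), such $\psi$ correspond bijectively to group homomorphisms $\pi \colon G_\Gamma \to H$ via $\psi = \pi \circ \varkappa_\Gamma$. Replacing $H$ by $\pi(G_\Gamma)$, I may assume $\pi$ is surjective; then $H$ is finite if and only if $N := \ker \pi$ has finite index in $G_\Gamma$, and $\psi$ is injective if and only if $N \cap W = \varnothing$. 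Conversely, any such $N$ produces a regrading of $\Gamma$ by $G_\Gamma/N$ via $B^{(h)} := \bigoplus_{\pi(\varkappa_\Gamma(g)) = h} A^{(g)}$. This proves (1).

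For (2), Lemmas~\ref{LemmaCoarseningSubset} and~\ref{LemmaSubsetCoarseningExists} together imply that coarsenings $\Gamma_2$ of $\Gamma$ are parametrized by subsets $W' \subseteq W$ via $G_{\Gamma_2} \cong G_\Gamma/N_1$, where $N_1$ denotes the normal closure of $W'$ in $G_\Gamma$, with $\pi_{\Gamma \to \Gamma_2}$ identified with the canonical projection. A short calculation tracking which pairs of elements of $\supp \Gamma$ get identified in $\supp \Gamma_2$ shows that $\diff \varkappa_{\Gamma_2}(\supp \Gamma_2)$ equals the image in $G_\Gamma/N_1$ of $W \setminus (W \cap N_1)$. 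Applying (1) to $\Gamma_2$ and pulling the resulting finite-index normal subgroup back to $G_\Gamma$ therefore translates ``$\Gamma_2$ is weakly equivalent to a grading by a finite group'' into ``there exists a finite-index normal $N \triangleleft G_\Gamma$ with $N_1 \subseteq N$ and $N \cap W = N_1 \cap W$''.

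To finish, one observes that the normal closure of any $W' \subseteq W$ is unchanged if one enlarges $W'$ to the ``closed'' set $W \cap N_1$. Hence requiring the displayed condition for every coarsening of $\Gamma$ (equivalently, for every closed $W'$) is the same as requiring it for every $W' \subseteq W$, which is precisely hereditary residual finiteness of $G_\Gamma$ with respect to $W$. The main obstacle is the bookkeeping in the previous paragraph: one must describe $\supp \Gamma_2$ precisely as equivalence classes of $\supp \Gamma$ modulo the relation induced by $N_1$ and verify the stated formula for $\diff \varkappa_{\Gamma_2}(\supp \Gamma_2)$ in terms of $W$ and $N_1$, in order to convert the ``disjointness'' condition in $G_{\Gamma_2}$ back into the desired intersection condition in $G_\Gamma$. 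Once that translation is in place, both directions of (2) follow at once.
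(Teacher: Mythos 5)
Your proof is correct and takes essentially the same route as the paper: part (1) via the universal property of $G_\Gamma$, and part (2) via the dictionary between coarsenings and subsets of $\diff\varkappa_\Gamma(\supp\Gamma)$ furnished by Lemmas~\ref{LemmaCoarseningSubset} and~\ref{LemmaSubsetCoarseningExists}. Your organization of part (2) is marginally cleaner in that you explicitly reduce to part (1) applied to a coarsening $\Gamma_2$ and pull back the resulting finite-index subgroup, whereas the paper repeats the ``no gluing'' verification directly, but the computation you flag as the ``main obstacle'' (that $\diff\varkappa_{\Gamma_2}(\supp\Gamma_2)$ is the image of $W\setminus(W\cap N_1)$) does check out and the two proofs are substantively identical.
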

\begin{proof}
The grading $\Gamma$ can be realized by any factor group of $G_\Gamma$ that does not glue the elements of the support, i.e. distinct elements of the support have distinct images in that factor group. Then if $G_\Gamma$ is residually finite with respect to $\diff\varkappa_\Gamma(\supp \Gamma)$,
there exists a finite factor group with this property. Conversely, if $\Gamma$ admits a realization
as a grading by a finite group $G$, then the subgroup of $G$ generated by the support
is a finite factor group of $G_\Gamma$ that does not glue the elements of the support.
Hence $G_\Gamma$ is residually finite with respect to $\diff\varkappa_\Gamma(\supp \Gamma)$ and the first part of the theorem is proved.

Suppose $G_\Gamma$ is hereditarily residually finite with respect to $\diff\varkappa_\Gamma(\supp \Gamma)$.
By Lemma~\ref{LemmaCoarseningSubset}, for any coarsening $\Gamma_1$ of $\Gamma$
there exists a normal subgroup $Q$ which is the normal closure of
\begin{equation*}\begin{split}Q \cap
\diff\varkappa_\Gamma(\supp \Gamma) = \lbrace \varkappa_{\Gamma}(g_1) \varkappa_{\Gamma}(g_2)^{-1} \mid
g_1,g_2 \in \supp \Gamma\text{ such that } \\ A^{(g_1)}\oplus A^{(g_2)} \subseteq A^{(h)}\text{ for some }h\in \supp \Gamma_1\rbrace,\end{split}\end{equation*}
such that $G_{\Gamma_1} \cong G_{\Gamma}/Q$. Since $G_\Gamma$ is hereditarily residually finite with respect to $\diff\varkappa_\Gamma(\supp \Gamma)$, there exists a normal subgroup $N \triangleleft G_{\Gamma}$ of finite index such that $Q \subseteq N$ and
\begin{equation}\label{EqNQDiffKappa} N \cap \diff\varkappa_\Gamma(\supp \Gamma) = Q \cap \diff\varkappa_\Gamma(\supp \Gamma).\end{equation}
Let $\bar N=\pi_{\Gamma \to \Gamma_1}(N)$. Suppose $\varkappa_{\Gamma_1}(h_1)\varkappa_{\Gamma_1}(h_2)^{-1} \in \bar N$ for some $h_1, h_2 \in \supp \Gamma_1$. Using the isomorphism $G_\Gamma/N \cong G_{\Gamma_1}/\bar N$ we get $\varkappa_{\Gamma}(g_1)\varkappa_{\Gamma}(g_2)^{-1} \in N$ for all $g_1, g_2 \in \supp \Gamma$ such that $A^{(g_1)} \subseteq A^{(h_1)}$ and $A^{(g_2)} \subseteq A^{(h_2)}$.
Now~(\ref{EqNQDiffKappa})
implies $\varkappa_{\Gamma_1}(g_1)\varkappa_{\Gamma_1}(g_2)^{-1} \in Q$,
$\varkappa_{\Gamma_1}(h_1)=\varkappa_{\Gamma_1}(h_2)$ and $h_1 = h_2$. Hence $\bar N$ does not glue the elements of the support of $\Gamma_1$. Since $|G_{\Gamma_1}/\bar N| = |G_\Gamma/N|< +\infty$,
the grading $\Gamma_1$ admits the finite grading group $G_{\Gamma_1}/\bar N$.

Suppose that every coarsening $\Gamma_1$ of $\Gamma$ admits a finite grading group. We claim that
$G_\Gamma$ is hereditarily residually finite with respect to $\diff\varkappa_\Gamma(\supp \Gamma)$.
Indeed, let $N_1 \triangleleft G_\Gamma$ be a normal closure of a subset of
$\diff\varkappa_\Gamma(\supp \Gamma)$. By Lemma~\ref{LemmaSubsetCoarseningExists},
there exists a grading $\Gamma_1$ such that $G_{\Gamma_1} \cong G_{\Gamma}/N_1$.
Since $\Gamma_1$ admits a finite grading group, there exists a normal subgroup $Q \triangleleft G_{\Gamma_1}$ of finite index such that $\varkappa_{\Gamma_1}(h_1) \varkappa_{\Gamma_1}(h_2)^{-1} \notin Q$
for all $h_1, h_2 \in \supp \Gamma_1$, $h_1 \ne h_2$. Let $N := \pi_{\Gamma \to \Gamma_1}^{-1}(Q)$.
Then $|G_\Gamma/N| = |G_{\Gamma_1}/ Q| < +\infty$, $N \supseteq \ker\pi_{\Gamma \to \Gamma_1}=N_1$, and
$$N \cap \diff\varkappa_\Gamma(\supp \Gamma) \supseteq N_1 \cap \diff\varkappa_\Gamma(\supp \Gamma).$$
Suppose $\varkappa_{\Gamma}(g_1) \varkappa_{\Gamma}(g_2)^{-1} \in N$ for some $g_1, g_2 \in \supp \Gamma$.
Take $h_1, h_2 \in \supp \Gamma_1$ such that $A^{(g_1)} \subseteq A^{(h_1)}$ and $A^{(g_2)} \subseteq A^{(h_2)}$. Then $\varkappa_{\Gamma_1}(h_1) \varkappa_{\Gamma_1}(h_2)^{-1} \in Q$
and $h_1 = h_2$. Thus $\varkappa_{\Gamma}(g_1) \varkappa_{\Gamma}(g_2)^{-1} \in N_1$
and $$N \cap \diff\varkappa_\Gamma(\supp \Gamma) = N_1 \cap \diff\varkappa_\Gamma(\supp \Gamma).$$
As a consequence, $G_\Gamma$ is hereditarily residually finite with respect to $\diff\varkappa_\Gamma(\supp \Gamma)$.
\end{proof}

The proposition below could be obtained as a consequence of Theorem~\ref{TheoremFinGradingsToGroupsTransition}, however we prefer to give a separate proof.

\begin{proposition}\label{prop:abelian}
Let $\Gamma \colon A=\bigoplus_{g \in G} A^{(g)}$ be a grading of a finite dimensional
algebra $A$ by an abelian group $G$. Then $\Gamma$ is weakly equivalent to a grading by a finite group.
\end{proposition}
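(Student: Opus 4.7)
My plan is to exploit residual finiteness of finitely generated abelian groups directly, without invoking the universal group machinery of Theorem~\ref{TheoremFinGradingsToGroupsTransition}.

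First I would pass to the subgroup $G_0 \subseteq G$ generated by $\supp \Gamma$. As noted in the remark following Definition~\ref{def:weakly}, $\Gamma$ is equivalent (hence weakly equivalent) to its realization as a $G_0$-grading, so I may assume $G = G_0$. Since $A$ is finite dimensional, $\supp \Gamma$ is finite, so $G$ is a finitely generated abelian group.

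Next I would consider the finite set
\[
D := \diff \supp \Gamma = \lbrace gh^{-1} \mid g,h\in\supp\Gamma,\ g\ne h\rbrace,
\]
which does not contain the identity of $G$. Since every finitely generated abelian group is residually finite, for each $d \in D$ there exists a subgroup of finite index avoiding $d$; intersecting these finitely many subgroups (and noting every subgroup of an abelian group is normal) yields a subgroup $N \triangleleft G$ of finite index with $N \cap D = \varnothing$. This is the key step, but the obstacle is minimal: it is just the classical fact that finitely generated abelian groups are residually finite.

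Finally I would produce the weak equivalence via the identity map on $A$. Set $H := G/N$, a finite group, and define the coarsening
\[
\Gamma' \colon A = \bigoplus_{\bar g \in H} A^{(\bar g)}, \qquad A^{(\bar g)} := \bigoplus_{g'\in G,\ g'N = gN} A^{(g')}.
\]
The condition $N \cap D = \varnothing$ means that distinct elements of $\supp\Gamma$ lie in distinct cosets of $N$, so each nonzero homogeneous component $A^{(g)}$ of $\Gamma$ coincides with the homogeneous component $A^{(\bar g)}$ of $\Gamma'$. Taking $\varphi = \id_A$ in Definition~\ref{def:weakly} gives the required weak equivalence between $\Gamma$ and the finite group grading $\Gamma'$.
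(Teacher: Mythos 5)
Your proof is correct, but it takes a genuinely different (and arguably cleaner) route than the paper's. Both arguments begin identically: restrict to the finitely generated abelian group generated by $\supp\Gamma$. The divergence is in how one produces a finite quotient of $G$ that is injective on $\supp\Gamma$. You invoke residual finiteness of finitely generated abelian groups as a black box: for each of the finitely many $d\in\diff\supp\Gamma$ you find a finite-index subgroup avoiding $d$, then intersect to obtain $N\triangleleft G$ of finite index with $N\cap\diff\supp\Gamma=\varnothing$, and pass to $G/N$. The paper instead uses the structure theorem for finitely generated abelian groups to write $G$ as a direct product of cyclic groups, and then replaces each infinite cyclic factor by $\mathbb Z/M\mathbb Z$ for $M$ large enough (as in Example~\ref{ExampleAbelianRegrading}); this amounts to an explicit choice of the subgroup $N$ that your argument asserts exists abstractly. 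Your version is shorter and ties in more directly with the residual finiteness criterion of Theorem~\ref{TheoremFinGradingsToGroupsTransition} (the paper itself notes the proposition follows from that theorem), while the paper's version is more concrete and avoids even the vocabulary of residual finiteness. Both are fully correct; neither has any gap.
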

\begin{proof}
We can replace $G$ with its subgroup generated by $\supp \Gamma$. Since $\supp \Gamma$ is finite,
without loss of generality we may assume that $G$ is a finitely generated abelian group.
Then $G$ is a direct product of free and primary cyclic groups. Replacing free cyclic groups
with cyclic groups of a large enough order (see Example~\ref{ExampleAbelianRegrading} below), we get a finite grading group.
\end{proof}

\begin{example}\label{ExampleAbelianRegrading}
Let $n\in \mathbb N$ and let $\Gamma$ be the elementary $\mathbb Z$-grading on $M_n(F)$
defined by the $n$-tuple $(1,2,\ldots,n)$, i.e. $e_{ij}\in M_n(F)^{(i-j)}$.
Then $$\supp \Gamma = \lbrace -(n-1), -(n-2), \ldots, -1, 0, 1, 2, \ldots, n-1 \rbrace$$
and $\Gamma$ is equivalent to the elementary $\mathbb Z/(2n\mathbb Z)$-grading
defined by the $n$-tuple $(\bar 1,\bar 2,\ldots,\bar n)$, i.e. $e_{ij}\in M_n(F)^{(\overline{i-j})}$.
\end{example}

Consider the grading $\Gamma_0$ on $M_n(F)$ by the free group $\mathcal F(x_1, \dots, x_{n-1})$
such that $e_{r,r+1} \in M_n(F)^{(x_r)}$ for $1\leqslant r \leqslant n-1$,
i.e. defined by the $n$-tuple $(x_1 x_2 \cdots x_{n-1}, x_2 \cdots x_{n-1}, \ldots, x_{n-1}, 1)$.
Note that the neutral element component of $\Gamma_0$ is the linear span of matrix units $e_{ii}$, $1\leqslant i \leqslant n$, and is $n$-dimensional, and all the rest components are $1$-dimensional. Since for each elementary grading the diagonal matrix units $e_{ii}$ belong to the neutral element component
of the grading and all matrix units are homogeneous, every elementary grading on $M_n(F)$
is a coarsening of $\Gamma_0$. Since $\mathcal F(x_1, \dots, x_{n-1})$ is free and all its free generators
belong to $\supp \Gamma_0$, $G_{\Gamma_0} \cong \mathcal F(x_1, \dots, x_{n-1})$.
Note also that $$\supp \Gamma_0 = \lbrace x_i x_{i+1}\cdots x_j  \mid 1\leqslant i \leqslant j \leqslant n-1 \rbrace\cup
\lbrace x_j^{-1} x_{j-1}^{-1}\cdots x_i^{-1}  \mid 1\leqslant i \leqslant j \leqslant n-1 \rbrace \cup \lbrace 1\rbrace.$$
Thus by Theorem~\ref{TheoremFinGradingsToGroupsTransition} Problem~\ref{ProblemMinNElAGradNonFin}
is equivalent to Problem~\ref{ProblemMinFnHRFRx1x2x3} below:
\begin{problem}\label{ProblemMinFnHRFRx1x2x3} 
Determine
the set $\Omega$ of  
the numbers $n\in\mathbb N$ such that the group $\mathcal F(x_1, \dots, x_{n-1})$ is hereditarily residually finite with respect to $\diff W_n$
where \begin{equation}\label{EqWnFreeGroup}W_n=\lbrace x_i x_{i+1}\cdots x_j  \mid 1\leqslant i \leqslant j \leqslant n-1 \rbrace\cup
\lbrace x_j^{-1} x_{j-1}^{-1}\cdots x_i^{-1}  \mid 1\leqslant i \leqslant j \leqslant n-1 \rbrace \cup \lbrace 1\rbrace.\end{equation}
\end{problem}

\section{Regrading full matrix algebras by finite groups}
\label{SectionRegradingMatrixFinite}

In this section we prove Theorem~\ref{TheoremRegradeElementaryOmega}
that deals with Problems~\ref{ProblemMinNElAGradNonFin} and~\ref{ProblemMinFnHRFRx1x2x3}.

In the lemma below we use the idea of~\cite[Section 4]{ClaseJespersDelRio}
and show, in particular, that all semigroup regradings of elementary
group gradings on $M_n(F)$ can be reduced to group regradings.

\begin{lemma}\label{LemmaSemigroupMatrixElementaryGroup}
Let $F$ be a field and let $n\in\mathbb N$.
If $\Gamma \colon M_n(F)=\bigoplus_{t\in T} M_n(F)^{(t)}$
is a grading on $M_n(F)$ by a semigroup $T$ such that all $e_{ij}$ are homogeneous elements and there exists an element $e \in T$
such that all $e_{ii}\in M_n(F)^{(e)}$, then $e^2=e$
and $\supp \Gamma \subseteq U(eTe)$ where
$U(eTe)$ is the group of invertible elements of the monoid $eTe$.
\end{lemma}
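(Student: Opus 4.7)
The plan is to reduce everything to a direct translation of matrix unit relations into equations in $T$. First I would observe that since every matrix unit $e_{ij}$ is homogeneous of some $T$-degree $t_{ij}$ and the components of $\Gamma$ form a direct sum, every nonzero homogeneous element must be a linear combination of matrix units that all share a common $T$-degree. Consequently $\supp \Gamma = \lbrace t_{ij} \mid 1 \leqslant i,j \leqslant n\rbrace$, and it suffices to prove both that $e^2 = e$ and that each $t_{ij}$ belongs to $U(eTe)$.

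For the idempotency of $e$, I would use the relation $e_{11}^2 = e_{11}$ at the level of degrees: the left hand side lies in $M_n(F)^{(e^2)}$ while the right hand side is a nonzero element of $M_n(F)^{(e)}$, so the direct-sum property of the grading forces $e^2 = e$. This already ensures that $eTe$ is a submonoid of $T$ with identity~$e$.

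Next I would exploit the two families of relations satisfied by the matrix units. The equalities $e_{ii} e_{ij} = e_{ij}$ and $e_{ij} e_{jj} = e_{ij}$ translate into $e\, t_{ij} = t_{ij}\, e = t_{ij}$, and in particular $e\, t_{ij}\, e = t_{ij}$, so $t_{ij} \in eTe$. The complementary equalities $e_{ij} e_{ji} = e_{ii}$ and $e_{ji} e_{ij} = e_{jj}$ translate into $t_{ij}\, t_{ji} = t_{ji}\, t_{ij} = e$, exhibiting $t_{ji} \in eTe$ as a two-sided inverse of $t_{ij}$ in the monoid $eTe$. Hence $t_{ij} \in U(eTe)$.

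I do not expect any serious obstacle: the argument is essentially a dictionary translation between the defining relations of the matrix units and equations in the grading semigroup. The only step requiring mild care is the very first one, where one must invoke the direct-sum property to conclude that the distinct graded components cannot combine to make an $e_{ij}$ homogeneous except via a single degree; after that, every remaining step is a one-line computation.
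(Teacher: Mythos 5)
Your proof is correct and follows essentially the same route as the paper's: translate $e_{11}^2=e_{11}$ into $e^2=e$, show $\supp\Gamma\subseteq eTe$ via the degree of the diagonal units, and use $e_{ij}e_{ji}=e_{ii}$ together with $e_{ji}e_{ij}=e_{jj}$ to produce two-sided inverses in $eTe$. The only cosmetic differences are that you make explicit the observation $\supp\Gamma=\{t_{ij}\mid 1\leqslant i,j\leqslant n\}$ (which the paper leaves implicit) and use the individual units $e_{ii}$, $e_{jj}$ where the paper invokes the identity matrix $\sum_i e_{ii}\in M_n(F)^{(e)}$.
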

\begin{proof}
$e_{11}^2=e_{11}$ implies $e^2=e$. Since the identity matrix belongs to
$M_n(F)^{(e)}$, we obtain $\supp \Gamma \subseteq eTe$.
Now $e_{ij}e_{ji}=e_{ii}$ implies $\supp \Gamma \subseteq U(eTe)$.
\end{proof}

The lemma below and its corollary show that if for some $m\in\mathbb N$
we have $m\notin \Omega$ (see the definition of $\Omega$ in Problem~\ref{ProblemMinNElAGradNonFin}), then $n\notin \Omega$ for all $n\geqslant m$.

\begin{lemma}\label{LemmaGradedSubalgebra}
Let $\Gamma \colon A = \bigoplus_{t\in T} A^{(t)}$
be a grading by a (semi)group $T$ on an algebra $A$ over a field $F$
and let $B$ be a graded subalgebra. Suppose that the grading on $B$
cannot be regraded by a finite (semi)group. Then $\Gamma$ cannot be regraded
by a finite (semi)group either.
\end{lemma}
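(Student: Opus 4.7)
My plan is to argue the contrapositive: assume that $\Gamma$ can be regraded by a finite (semi)group and deduce that the induced grading on $B$ can be regraded by a finite (semi)group as well, contradicting the hypothesis. So suppose there exist a finite (semi)group $S$, an $S$-graded algebra $A' = \bigoplus_{s\in S}(A')^{(s)}$, and an algebra isomorphism $\varphi \colon A \mathrel{\widetilde\to} A'$ such that for every $t\in T$ with $A^{(t)}\ne 0$ there is an $s\in S$ with $\varphi(A^{(t)}) = (A')^{(s)}$, in accordance with Definition~\ref{def:weakly}.

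The key step is to transport $B$ through $\varphi$ and observe that the image is already graded by $S$. Put $B' := \varphi(B)\subseteq A'$. Since $B$ is a graded subalgebra of $A$ with respect to $\Gamma$, we have $B = \bigoplus_{t\in T}(B\cap A^{(t)})$, so $B' = \bigoplus_{t\in T} \varphi(B\cap A^{(t)})$. For every $t\in T$ with $B\cap A^{(t)}\ne 0$ (in particular $A^{(t)}\ne 0$), the equality $\varphi(B\cap A^{(t)}) = \varphi(B)\cap \varphi(A^{(t)}) = B' \cap (A')^{(s)}$ holds for the corresponding $s\in S$. Hence $B' = \bigoplus_{s\in S}(B' \cap (A')^{(s)})$ is a graded subalgebra of $A'$, and the restriction $\varphi\bigr|_{B} \colon B \mathrel{\widetilde\to} B'$ is an algebra isomorphism that sends each nonzero homogeneous component of the induced grading on $B$ onto a homogeneous component of the $S$-grading on $B'$. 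Thus $\varphi\bigr|_B$ realizes a weak equivalence between the grading on $B$ and a grading by the finite (semi)group $S$ in the sense of Definition~\ref{def:weakly}, contradicting the assumption on $B$.

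There is no real obstacle here: the only thing to check is the routine fact that a weak equivalence restricts to a weak equivalence on any graded subalgebra, which is just the observation that $\varphi$ preserves intersections with graded components. The argument works uniformly whether $T$ is a group or a semigroup, since Definition~\ref{def:weakly} only demands a bijective correspondence of nonzero homogeneous components and no multiplicative compatibility on the indexing sets.
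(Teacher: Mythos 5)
Your proof is correct and follows the same idea as the paper's (which is stated very tersely: ``Each regrading on $\Gamma$ induces a regrading of the grading on $B$''). You simply spell out the routine verification that a weak equivalence of $A$ restricts to a weak equivalence on the graded subalgebra $B$, using $\varphi(B\cap A^{(t)}) = \varphi(B)\cap\varphi(A^{(t)})$.
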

\begin{proof}
Each regrading on $\Gamma$ induces a regrading of the grading on $B$.
Therefore, if it were possible to regrade $\Gamma$ by a finite (semi)group,
the same would be possible for the grading on $B$. However, the latter is impossible.
\end{proof}

\begin{corollary}\label{CorollaryMatrixElemGreaterSize}
If for some $m\in\mathbb N$ and a group $G$ there exists an elementary $G$-grading on $M_m(F)$ that is not weakly equivalent to a grading by a finite (semi)group,
then an elementary $G$-grading with this property exists on $M_n(F)$ for every $n\geqslant m$.
\end{corollary}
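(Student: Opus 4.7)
The plan is to use Lemma~\ref{LemmaGradedSubalgebra} together with the flexibility of extending an elementary grading by choosing additional entries in its defining tuple.

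Let $\Gamma_m$ be the given elementary $G$-grading on $M_m(F)$ defined (in the sense of Definition~\ref{def:elementary}) by some tuple $(g_1,\dots,g_m)\in G^m$. Given $n\geqslant m$, extend this tuple in an arbitrary manner to an $n$-tuple $(g_1,\dots,g_m,g_{m+1},\dots,g_n)\in G^n$ (for definiteness, one may choose $g_{m+1}=\dots=g_n=1_G$, though this is irrelevant). Let $\Gamma_n$ denote the resulting elementary $G$-grading on $M_n(F)$. I would then invoke the evident embedding $\iota\colon M_m(F)\hookrightarrow M_n(F)$ that sends a matrix into the upper-left $m\times m$ block, i.e.\ $\iota(e_{ij})=e_{ij}$ for $1\leqslant i,j\leqslant m$.

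The key verification is that $\iota(M_m(F))$ is a graded subalgebra of $(M_n(F),\Gamma_n)$ and that the induced grading on it coincides (under $\iota$) with $\Gamma_m$. Closure under multiplication and graded-ness as a subspace are immediate since $\iota(M_m(F))$ is spanned by the matrix units $e_{ij}$ with $1\leqslant i,j\leqslant m$, each of which is homogeneous in $\Gamma_n$. For such $i,j$ the $\Gamma_n$-degree of $e_{ij}$ is $g_ig_j^{-1}$, which is exactly the $\Gamma_m$-degree of $e_{ij}$, so $\iota$ is an isomorphism of graded algebras onto $\iota(M_m(F))$.

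Assuming $\Gamma_m$ is not weakly equivalent to any grading by a finite (semi)group, Lemma~\ref{LemmaGradedSubalgebra} applied to the graded subalgebra $\iota(M_m(F))\subseteq (M_n(F),\Gamma_n)$ immediately yields that $\Gamma_n$ is not weakly equivalent to any grading by a finite (semi)group either. Since $\Gamma_n$ is by construction an elementary $G$-grading on $M_n(F)$, this proves the statement.

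There is no real obstacle: the only point to be careful about is to confirm that Lemma~\ref{LemmaGradedSubalgebra} does not require the subalgebra to be unital (it does not, and in any case $\iota(M_m(F))$ has the idempotent $e_{11}+\dots+e_{mm}$ as a unit inside itself, which is all one needs to follow the elementary argument of that lemma).
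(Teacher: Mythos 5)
Your proposal is correct and takes essentially the same approach as the paper: extend the defining tuple to length $n$ (the paper repeats $g_m$, you use $1_G$, but as you note the choice is immaterial), observe that the upper-left $m\times m$ block is a graded subalgebra carrying the original grading, and conclude via Lemma~\ref{LemmaGradedSubalgebra}.
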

\begin{proof}
Suppose this elementary $G$-grading on $M_m(F)$ can be realized by an $m$-tuple
$(g_1, g_2, \dots, g_m)$. Consider the elementary $G$-grading $\Gamma$ on $M_n(F)$
defined by the $n$-tuple $(g_1, g_2, \dots, g_m,\underbrace{g_m, \dots, g_m}_{n-m})$. The algebra $M_m(F)$ becomes a graded subalgebra of $M_n(F)$ (with a different identity element).
Therefore, if $\Gamma$ were weakly equivalent to a grading by a finite (semi)group,
then it would be possible to reindex the graded components of $M_n(F)$
by elements of a finite group and the original $G$-grading on $M_m(F)$ had this property too. Hence $\Gamma$ is not weakly equivalent to any grading by a finite (semi)group.
\end{proof}
Since Problems~\ref{ProblemMinNElAGradNonFin} and~\ref{ProblemMinFnHRFRx1x2x3}
are equivalent, we immediately get
\begin{corollary}
If $\mathcal F(x_1, \dots, x_{m-1})$ is not hereditarily residually finite with respect to $\diff W_m$ for some $m\in\mathbb N$, then
$\mathcal F(x_1, \dots, x_{n-1})$ is not hereditarily residually finite with respect to $\diff W_n$ for all $n \geqslant m$. (See the definition of $W_n$ in~(\ref{EqWnFreeGroup}).)
\end{corollary}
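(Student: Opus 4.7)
The plan is to combine Corollary \ref{CorollaryMatrixElemGreaterSize} with the equivalence between Problem \ref{ProblemMinNElAGradNonFin} and Problem \ref{ProblemMinFnHRFRx1x2x3} that was established in the paragraph preceding Problem \ref{ProblemMinFnHRFRx1x2x3} via Theorem \ref{TheoremFinGradingsToGroupsTransition}(2). Since both the statement and the tool are already in the ``group-theoretic'' and ``grading-theoretic'' languages respectively, the proof consists of translating, applying the matrix-algebra corollary, and translating back.

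First, I would translate the hypothesis into the language of gradings. Assume that $\mathcal F(x_1,\dots,x_{m-1})$ fails to be hereditarily residually finite with respect to $\diff W_m$. Consider the grading $\Gamma_0$ on $M_m(F)$ described in the paragraph before Problem \ref{ProblemMinFnHRFRx1x2x3}; its universal group is (canonically identified with) $\mathcal F(x_1,\dots,x_{m-1})$ and its support is $W_m$. By Theorem \ref{TheoremFinGradingsToGroupsTransition}(2) applied to $\Gamma_0$, the hypothesis is equivalent to saying that at least one coarsening of $\Gamma_0$ is \emph{not} weakly equivalent to a grading by a finite group. Since every elementary grading on $M_m(F)$ arises as a coarsening of $\Gamma_0$, this yields an elementary $G$-grading on $M_m(F)$, for some group $G$, that cannot be regraded by any finite group.

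Second, I would invoke Corollary \ref{CorollaryMatrixElemGreaterSize} to lift this failure from $m$ to every $n\geqslant m$: for each such $n$ there exists an elementary $G$-grading on $M_n(F)$ that is not weakly equivalent to any grading by a finite (semi)group. In particular, this grading is a coarsening of the corresponding grading $\Gamma_0$ on $M_n(F)$, whose universal group is $\mathcal F(x_1,\dots,x_{n-1})$ with support $W_n$.

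Third, I would translate back through Theorem \ref{TheoremFinGradingsToGroupsTransition}(2): the existence of a coarsening of $\Gamma_0$ on $M_n(F)$ that admits no finite-group regrading precisely means that $\mathcal F(x_1,\dots,x_{n-1})$ is \emph{not} hereditarily residually finite with respect to $\diff W_n$, for every $n\geqslant m$, which is the desired conclusion. There is essentially no obstacle here: the entire content is already packed into the equivalence of the two problems and into Corollary \ref{CorollaryMatrixElemGreaterSize}, so the only task is to observe that these two facts compose to give the stated corollary.
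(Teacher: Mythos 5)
Your proposal is correct and follows essentially the same route as the paper, which simply observes that the claim is an immediate consequence of Corollary~\ref{CorollaryMatrixElemGreaterSize} once Problems~\ref{ProblemMinNElAGradNonFin} and~\ref{ProblemMinFnHRFRx1x2x3} are identified via Theorem~\ref{TheoremFinGradingsToGroupsTransition}(2). You merely unfold that identification explicitly in both directions, which is a useful expansion but not a different argument.
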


Recall that a group is \textit{residually finite} if the intersection of its normal subgroups of finite index is trivial.

\begin{theorem}\label{TheoremFiniteRegradingImpossible} Let $F$ be a field and let $G$ be a finitely presented
group which is not residually finite. (For example, $G$ is a finitely presented infinite simple group, see~\cite{Higman}.)
Then there exists an elementary $G$-grading on a full matrix algebra which is not weakly equivalent to any $H$-grading for any finite (semi)group $H$.
\end{theorem}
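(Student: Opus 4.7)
The plan is to combine Theorem~\ref{TheoremGivenFinPresGroupExistence} (which realizes any finitely presented group as the universal group of an elementary matrix grading while placing a prescribed finite subset inside the support) with the first part of Theorem~\ref{TheoremFinGradingsToGroupsTransition} (which recasts ``weakly equivalent to a finite-group grading'' as ``residual finiteness of the universal group with respect to the difference set of the support''), and to handle the semigroup case via Lemma~\ref{LemmaSemigroupMatrixElementaryGroup}.

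\emph{Step 1: picking a bad element.} Because $G$ is not residually finite, the intersection $\bigcap N$ of all normal subgroups $N \triangleleft G$ of finite index is non-trivial; pick some $g_0\in \bigl(\bigcap N\bigr)\setminus\{1_G\}$. \emph{Step 2: building the grading.} Apply Theorem~\ref{TheoremGivenFinPresGroupExistence} to $G$ and $V=\{g_0\}$ to obtain an $n\in\mathbb N$ and an elementary $G$-grading $\Gamma$ on $M_n(F)$ with $G_\Gamma\cong G$ (we identify the two, so $\varkappa_\Gamma$ is the inclusion) and $g_0\in\supp\Gamma$. Since every elementary grading contains $1_G$ in its support (the diagonal matrix units lie in the identity component), we conclude $g_0=g_0\cdot 1_G^{-1}\in\diff\varkappa_\Gamma(\supp\Gamma)$.

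\emph{Step 3: semigroup $\Rightarrow$ finite group.} Suppose, for a contradiction, that $\Gamma$ is weakly equivalent to some grading $\Gamma'$ on an algebra $B$ by a finite semigroup $T$, with implementing isomorphism $\varphi\colon M_n(F)\xrightarrow{\sim} B$. Because $\varphi$ is an algebra isomorphism and $M_n(F)$ is simple, $B\cong M_n(F)$; moreover the elements $\varphi(e_{ij})$ satisfy the matrix unit relations and may therefore be taken as the standard matrix units of $B$. Under this choice $\Gamma'$ is a $T$-grading on $M_n(F)$ in which every matrix unit is homogeneous and all diagonal matrix units lie in a single component $M_n(F)^{(e)}$ (namely $\varphi(M_n(F)^{(1_G)})$). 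Lemma~\ref{LemmaSemigroupMatrixElementaryGroup} now yields $e^2=e$ and $\supp\Gamma'\subseteq U(eTe)$, so $\Gamma'$ is in fact a grading by the finite group $U(eTe)$; hence $\Gamma$ is weakly equivalent to a grading by a \emph{finite group}.

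\emph{Step 4: contradiction.} By the first part of Theorem~\ref{TheoremFinGradingsToGroupsTransition}, $G=G_\Gamma$ must be residually finite with respect to $\diff\varkappa_\Gamma(\supp\Gamma)$. Since $g_0$ lies in that difference set by Step~2, there exists a normal subgroup $N\triangleleft G$ of finite index with $g_0\notin N$, contradicting the choice of $g_0$ in Step~1. Therefore no such $\Gamma'$ exists, proving the theorem.

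The only delicate point is Step~3, where one has to reduce a semigroup regrading to a group regrading; once the images of the matrix units under $\varphi$ are identified with a matrix-unit system in $B$, Lemma~\ref{LemmaSemigroupMatrixElementaryGroup} does the work. Everything else is a direct combination of the two existence/characterization theorems proved earlier.
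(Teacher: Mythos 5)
Your proof is correct and follows essentially the same route as the paper: pick $g_0\neq 1_G$ in the intersection of all finite-index normal subgroups, realize $G$ as the universal group of an elementary grading $\Gamma$ with $g_0\in\supp\Gamma$ via Theorem~\ref{TheoremGivenFinPresGroupExistence}, and use Lemma~\ref{LemmaSemigroupMatrixElementaryGroup} to reduce the semigroup case to the group case. The only difference is in the final step: the paper directly invokes the universal property of $G_\Gamma$ to get a homomorphism $\varphi\colon G\to H$ extending the support embedding, and then notes $\varphi(g_0)=1_H=\varphi(1_G)$ forces the embedding to collapse, whereas you route through the already-proved Theorem~\ref{TheoremFinGradingsToGroupsTransition}(1), observing $g_0=g_0\cdot 1_G^{-1}\in\diff\varkappa_\Gamma(\supp\Gamma)$ to reach the same contradiction; the two are substantively the same argument packaged differently.
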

\begin{proof}
Let $g_0\ne 1_G$ be an element that belongs to the intersection of
all normal subgroups of $G$ of finite index. (In particular, if $G$ is simple, we take 
an arbitrary element $g_0\ne 1_G$.)

By Theorem~\ref{TheoremGivenFinPresGroupExistence},
there exists an elementary $G$-grading $\Gamma$ on $M_n(F)$ for some $n\in\mathbb N$
such that $G_\Gamma \cong G$ and $g_0 \in \supp G$.

Suppose that $\Gamma$ is weakly equivalent to a grading by a finite semigroup $H$ and $\psi \colon \supp \Gamma \hookrightarrow H$ is the corresponding embedding of the support. Then $\Gamma$ can be regraded by $H$
and there exists $e\in H$ such that all $e_{ii} \in M^{(e)}_n(F)$.
Since all elements $e_{ij}$ are homogeneous,
by Lemma~\ref{LemmaSemigroupMatrixElementaryGroup} we may assume that $H$ is a group. Since $G_\Gamma \cong G$, there exists a unique homomorphism $\varphi \colon G \to H$ such that $\varphi|_{\supp \Gamma} = \psi$.
However $H$ is finite, $\ker \varphi$ is of finite index, and  therefore $\varphi(g_0)=1_H$. Since $g_0 \ne 1_G$ and $g_0, 1_G \in \supp \Gamma$, this $H$-grading cannot be weakly equivalent to $\Gamma$.
\end{proof}

\begin{corollary}\label{CorollaryFiniteRegradingImpossible349} If $n \geqslant 349$, then $M_n(F)$ admits a grading by an infinite group that is not weakly equivalent to any grading by a finite (semi)group.
\end{corollary}
\begin{proof}
Consider Thompson's finitely presented infinite simple group $G_{2,1}$ (see e.g. \cite[Section~8]{Higman}).
We can take $g_0$ to be any of its generators which are all anyway in the support
of the grading constructed in Theorem~\ref{TheoremGivenFinPresGroupExistence} for $G=G_{2,1}$.
Therefore it is sufficient to apply Theorem~\ref{TheoremGivenFinPresGroupExistence} with $V = \varnothing$.
Summing up the lengths of the
defining relators, we obtain that $n$ in the proof
of Theorem~\ref{TheoremGivenFinPresGroupExistence}
equals $349$. Now we apply Corollary~\ref{CorollaryMatrixElemGreaterSize}.
\end{proof}

Corollary~\ref{CorollaryFiniteRegradingImpossible349} implies
the upper bound in Theorem~\ref{TheoremRegradeElementaryOmega}.

Recall that an algebra $M(\gamma, \sigma)$ (see the definition in Section~\ref{SubsectionElementaryGradedSimpleAlgebras}), where $\gamma$ is an $n$-tuple of group elements, contains a graded subalgebra which is
graded isomorphic to $M_n(F)$ with the elementary grading determined by $\gamma$, namely, the subalgebra that is the linear span of $e_{ij}\otimes u_{1_H}$, $1\leqslant i,j \leqslant n$. Therefore, using Corollary~\ref{CorollaryFiniteRegradingImpossible349}
and Lemma~\ref{LemmaGradedSubalgebra}, one obtain that, for every
$n\geqslant 349$, every finite subgroup $H \subseteq G_{2,1}$, and every $\sigma \in Z^2(H, F^\times)$, there exists an $n$-tuple $\gamma$ of elements of  $G_{2,1}$ such that the standard grading on $M(\gamma, \sigma)$ is not weakly equivalent to a grading by a finite (semi)group.

Now we present a class of elementary gradings that are weakly equivalent to gradings by finite groups. Namely, consider elementary gradings where
distinct non-diagonal matrix units $e_{ij}$ belong to distinct homogeneous components corresponding to group elements $g\ne 1$.

\begin{theorem}\label{TheoremFiniteRegradingAllDifferent} Let $G$ be a group, let $F$ be a field, let $n\in\mathbb N$, and let $(g_1, \dots, g_n)$ be an $n$-tuple of elements of $G$ such that $g_i g_j^{-1} = g_k g_\ell^{-1}$ if and only if either
$\left\lbrace \begin{smallmatrix} i=k, \\ j=\ell \end{smallmatrix}\right.$ or 
$\left\lbrace \begin{smallmatrix} i=j, \\ k=\ell. \end{smallmatrix}\right.$
 Then the elementary grading on $M_n(F)$
defined by $(g_1, \dots, g_n)$ is weakly equivalent to the elementary $S_{n+1}$-grading defined by $(\gamma_1,\dots,\gamma_n)$ where $S_{n+1}$ is the symmetric group acting on $\lbrace 1,2,\dots,n+1\rbrace$ and $\gamma_i = (1, i+1) \in S_{n+1}$ is the transposition switching $1$ and $i+1$.
The same grading on $M_n(F)$ is weakly equivalent to the elementary $\mathbb Z/(2^{n+1}\mathbb Z)$-grading defined by $(\bar 2, \bar 2^2, \bar 2^3, \dots, \bar 2^n)$.
\end{theorem}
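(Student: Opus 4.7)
The plan is to use the identity map $\id_{M_n(F)}$ to implement both weak equivalences. Since all three gradings on $M_n(F)$ are elementary, the matrix unit $e_{ij}$ is homogeneous in each of them, of degree $g_i g_j^{-1}$, $\gamma_i \gamma_j^{-1}$, and $\bar 2^i - \bar 2^j$ respectively. Consequently $\id_{M_n(F)}$ maps homogeneous components onto homogeneous components in each of the three pairings if and only if the corresponding equivalence relations on $\{1,\dots,n\}^2$, defined by $(i,j) \sim (k,\ell)$ whenever $e_{ij}$ and $e_{k\ell}$ have the same degree, coincide.

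By hypothesis, the equivalence relation induced by $(g_1,\dots,g_n)$ is the finest one compatible with any elementary grading, namely $(i,j) \sim (k,\ell)$ if and only if either $(i,j)=(k,\ell)$ or both $i=j$ and $k=\ell$ (the diagonal pairs must all be identified because each $e_{ii}$ has trivial degree). It therefore suffices to verify the same combinatorial property for both candidate tuples.

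For the $S_{n+1}$-tuple, since each $\gamma_j=(1,j+1)$ is an involution, for $i \ne j$ I compute $\gamma_i \gamma_j^{-1} = (1,i+1)(1,j+1) = (1, j+1, i+1)$, a $3$-cycle from which both the image of $1$ (namely $j+1$) and the image of $j+1$ (namely $i+1$) can be read off, so the pair $(i,j)$ is uniquely recovered from $\gamma_i \gamma_j^{-1}$; for $i=j$ we get the identity permutation, which cannot coincide with any $3$-cycle. Hence $\gamma_i \gamma_j^{-1} = \gamma_k \gamma_\ell^{-1}$ forces exactly the required alternative.

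For the cyclic tuple, $\bar 2^i - \bar 2^j = \bar 2^k - \bar 2^\ell$ in $\mathbb{Z}/(2^{n+1}\mathbb{Z})$ is equivalent to $2^i + 2^\ell \equiv 2^j + 2^k \pmod{2^{n+1}}$; since $1 \le i,j,k,\ell \le n$, both sides lie in the integer interval $[4, 2^{n+1}]$, so the congruence upgrades to the genuine equality $2^i + 2^\ell = 2^j + 2^k$ in $\mathbb{Z}$. A short case split according to which of the four indices coincide, combined with uniqueness of binary expansions, gives exactly the two desired possibilities: either $i=j$ and $k=\ell$, or $(i,j)=(k,\ell)$. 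I expect this short $2$-adic case analysis, in particular the coincident subcase $i = \ell$ where $2^{i+1}=2^j+2^k$ forces $j=k=i$, to be the only delicate point; everything else is immediate from the definitions and Remark~\ref{RemarkElementary}.
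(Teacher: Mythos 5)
Your proof is correct and follows essentially the same route as the paper: reduce to checking that each candidate tuple realizes the same collision pattern on matrix-unit indices as $(g_1,\dots,g_n)$, then verify that pattern directly via the $3$-cycles $(1,j+1,i+1)$ and the differences of powers of $2$. Your appeal to uniqueness of binary expansions after rewriting $2^i - 2^j = 2^k - 2^\ell$ as $2^i + 2^\ell = 2^j + 2^k$ is a cosmetic variant of the paper's division by $2^{\min(i,j,k,\ell)}$, and your explicit range bound $[4,\,2^{n+1}]$ justifying the lift from $\mathbb{Z}/(2^{n+1}\mathbb{Z})$ to $\mathbb{Z}$ makes precise a step the paper leaves implicit.
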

\begin{proof} In order to prove the first part of the theorem, it suffices to prove that $\gamma_i \gamma_j^{-1} = \gamma_k \gamma_\ell^{-1}$ if and only if either $\left\lbrace \begin{smallmatrix} i=k, \\ j=\ell \end{smallmatrix}\right.$ or 
$\left\lbrace \begin{smallmatrix} i=j, \\ k=\ell. \end{smallmatrix}\right.$ However, if $i=j$, then $\gamma_i \gamma_j^{-1}$ is the identity permutation and if $i\ne j$, then $\gamma_i \gamma_j^{-1} = (1, j+1, i+1)$ (a $3$-cycle).

In order to prove the second part of the theorem, we 
notice that $\bar 2^i-\bar 2^j=\bar 2^k-\bar 2^\ell$ if and only if $2^i-2^j=2^k-2^\ell$ if and only if either $\left\lbrace \begin{smallmatrix} i=k, \\ j=\ell \end{smallmatrix}\right.$ or 
$\left\lbrace \begin{smallmatrix} i=j, \\ k=\ell. \end{smallmatrix}\right.$ Indeed, dividing the equality $2^i-2^j=2^k-2^\ell$ by $2^{\min(i,j,k,\ell)}$, we see that at least two of the numbers $i,j,k,\ell$ must coincide
which implies the assertion claimed.
\end{proof}

In Theorem~\ref{TheoremFiniteRegradingImpossible} we have constructed
an elementary $G$-grading on $M_n(F)$ that is not weakly equivalent to a grading by a finite group.
However, this grading is a coarsening of the elementary grading by the free group
$\mathcal F(z_1, \dots, z_{n-1})$ that corresponds to the $n$-tuple $(1,z_1,\dots, z_{n-1})$
(this grading was considered in~\cite[Proposition~4.11]{CibilsRedondoSolotar}, \cite[Lemma~4.5]{ginosargradings})
and which is by Theorem~\ref{TheoremFiniteRegradingAllDifferent}
weakly equivalent to a grading by a finite group. In other words, there exist gradings that
can be regraded by a finite group, but some of their coarsenings cannot.

\begin{theorem}\label{TheoremFiniteRegradingSmallMatrix}
Let $\Gamma$ be an elementary $G$-grading on the full matrix algebra $M_n(F)$
where $n \leqslant 3$, $F$ is a field, and $G$ is a group.
Then $\Gamma$ is weakly equivalent to a grading by a finite group.
\end{theorem}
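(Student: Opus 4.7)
The plan is to apply Theorem~\ref{TheoremFinGradingsToGroupsTransition}(1): it suffices to show that $G_\Gamma$ is residually finite with respect to the finite set $\diff\varkappa_\Gamma(\supp\Gamma)\subseteq G_\Gamma\setminus\{1\}$, and since finite intersections of finite-index normal subgroups again have finite index, it is in fact enough to prove that $G_\Gamma$ itself is residually finite as an abstract group. For $n=1$ there is nothing to prove. For $n=2$, Remark~\ref{RemarkElementary} shows that $G_\Gamma=\langle\deg e_{12}\rangle$ is cyclic, hence residually finite.

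Now fix $n=3$ and set $a:=\deg e_{12}$ and $b:=\deg e_{23}$, so that $\deg e_{13}=ab$; by Remark~\ref{RemarkElementary} the grading is determined by $(a,b)$, and $G_\Gamma$ is a quotient of the free group $\mathcal{F}(a,b)$. Every matrix unit has degree in the seven-element set $\{1,a^{\pm 1},b^{\pm 1},(ab)^{\pm 1}\}$, and the additional relations of $G_\Gamma$ come exactly from coincidences among these seven words in the original grading group. I would proceed by enumerating all possible such coincidences and verifying in each case that the resulting $G_\Gamma$ is residually finite.

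A case inspection shows that every non-trivial coincidence is either (i) of a type that forces $G_\Gamma$ to become cyclic (e.g.\ $a=b$; $a=(ab)^{-1}$, which gives $b=a^{-2}$; $b=(ab)^{-1}$, which gives $a=b^{-2}$; or any coincidence $g_i=g_j$ with $i\neq j$, which forces one of $a$, $b$, or $ab$ to equal $1$); or (ii) one of the three ``involution'' relations $a^2=1$, $b^2=1$, $(ab)^2=1$. Under the substitution $c:=ab$ (so that $b=a^{-1}c$ and $\mathcal{F}(a,b)=\mathcal{F}(a,c)$), a single relation of type (ii) presents $G_\Gamma$ as the free product $\mathbb{Z}\ast\mathbb{Z}/2\mathbb{Z}$; any two of them give the infinite dihedral group $\mathbb{Z}/2\mathbb{Z}\ast\mathbb{Z}/2\mathbb{Z}$; all three together give the Klein four-group. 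Combining coincidences, $G_\Gamma$ is therefore isomorphic to one of: the trivial group, a finite cyclic group, $\mathbb{Z}$, $\mathcal{F}(a,b)$, $\mathbb{Z}\ast\mathbb{Z}/2\mathbb{Z}$, $\mathbb{Z}/2\mathbb{Z}\ast\mathbb{Z}/2\mathbb{Z}$, or the Klein four-group. Each of these groups is residually finite, by the classical residual finiteness of finitely generated abelian and free groups together with Gruenberg's theorem that a free product of residually finite groups is residually finite.

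The main difficulty I expect is the bookkeeping of the case analysis: one must verify that no combination of coincidences among the seven support candidates produces a group outside the short list above, and that any two simultaneously imposed coincidences either are already subsumed by cyclic reduction via (i) or collapse further into something already on the list (often into a finite cyclic group after a computation like ``$a=b$ together with $a^2=1$ yields $G_\Gamma\cong\mathbb{Z}/2\mathbb{Z}$''). Once this routine but somewhat tedious enumeration is completed, $G_\Gamma$ is residually finite, and Theorem~\ref{TheoremFinGradingsToGroupsTransition}(1) produces the desired weakly equivalent grading by a finite group.
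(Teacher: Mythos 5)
Your argument is correct, but it follows a genuinely different route from the paper's. The paper does not compute the universal group $G_\Gamma$ at all; instead, using the degree matrix $\left(\begin{smallmatrix}1 & g & gh\\ g^{-1} & 1 & h\\ h^{-1}g^{-1} & h^{-1} & 1\end{smallmatrix}\right)$ and the criterion of Theorem~\ref{TheoremWeakEquivGradedSimple} (two elementary gradings on $M_n(F)$ are weakly equivalent precisely when their degree matrices have the same coincidence pattern among entries), it runs through the same enumeration of coincidences that you do and for each case exhibits a concrete finite group ($\mathbb Z/2\mathbb Z$, $\mathbb Z/3\mathbb Z$, $\mathbb Z/6\mathbb Z$, $(\mathbb Z/2\mathbb Z)^2$, $S_3$) realizing the same coincidence pattern. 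You instead appeal to the group-theoretic criterion of Theorem~\ref{TheoremFinGradingsToGroupsTransition}(1), first reducing residual finiteness with respect to the finite set $\diff\varkappa_\Gamma(\supp\Gamma)\subseteq G_\Gamma\setminus\{1\}$ to residual finiteness of $G_\Gamma$ as an abstract group (valid, since a finite intersection of finite-index normal subgroups has finite index), and then identifying $G_\Gamma$ as one of $\{1\}$, a finite cyclic group, $\mathbb Z$, $\mathcal F(a,b)$, $\mathbb Z\ast\mathbb Z/2\mathbb Z$, $\mathbb Z/2\mathbb Z\ast\mathbb Z/2\mathbb Z$, or the Klein four-group, all of which are residually finite by Gruenberg's theorem on free products. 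I checked your classification of the coincidences among $\{1,a^{\pm1},b^{\pm1},(ab)^{\pm1}\}$: each of the $\binom{7}{2}$ possible equalities indeed either forces $G_\Gamma$ to be cyclic or is one of the three involution relations $a^2=1$, $b^2=1$, $(ab)^2=1$, and the substitution $c=ab$ correctly identifies the mixed cases, so the list of possible $G_\Gamma$ is complete. The two proofs involve essentially the same bookkeeping; what your route buys is a more conceptual final step (residual finiteness of a short list of well-known groups) at the cost of not producing the explicit finite grading groups that the paper exhibits. One small attribution quibble: for $n=2$ the cyclicity of $G_\Gamma$ follows from the description $G_\Gamma\cong\mathcal F(\supp\Gamma)/N$ given just after Theorem~\ref{TheoremGivenFinPresGroupExistence} (the support is $\{1,a,a^{-1}\}$), rather than from Remark~\ref{RemarkElementary} per se, but this is cosmetic.
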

\begin{remark}
If $F$ is algebraically closed,
then the theorem holds for all gradings on $M_n(F)$, $n \leqslant 3$, not necessarily elementary ones.
Indeed, by Theorem~\ref{TheoremBahturinZaicevSeghal} any grading
on $M_n(F)$ is isomorphic to the standard grading on $M(\gamma, \sigma)$ for some $\gamma$ and $\sigma$. Comparing the dimensions, we obtain that either $M(\gamma, \sigma)$ is a twisted group algebra of a finite group,
i.e. there is nothing to prove, or $\sigma$ is a cocycle of the trivial group and the grading $\Gamma$ is isomorphic to an elementary one.
\end{remark}
\begin{proof}[Proof of Theorem~\ref{TheoremFiniteRegradingSmallMatrix}]
If $n=1$, then $M_n(F)=F$ and it can be regraded by the trivial group.
Therefore, we may assume that $n=2,3$.
Recall that $e_{ij} \in M_n(F)^{(g_i g_j^{-1})}$
where $(g_1, \dots, g_n)$ is the $n$-tuple defining the elementary grading.
Consider the $n\times n$ matrix $A$ where the $(i,j)$th entry is $g_i g_j ^{-1}$.
 It is clear that this matrix completely determines the grading.
Theorem~\ref{TheoremWeakEquivGradedSimple} implies
that if two elementary gradings $\Gamma_1$ and $\Gamma_2$ have matrices $A$ and $B$ such that two entries in $A$ coincide if and only if the corresponding
entries in $B$ coincide, then $\Gamma_1$ and $\Gamma_2$ are weakly equivalent.

In the case $n=2$, we get $A=\left(\begin{smallmatrix}
1 & g \\
g^{-1} & 1
\end{smallmatrix}\right)$ where $1=1_G$. Here we have two cases: $g=g^{-1}$
and $g\ne g^{-1}$. In the case $g=g^{-1}$ we can regrade $M_2(F)$
by $\mathbb Z/2\mathbb Z$ (the elementary grading is defined by the couple $(\bar 0,\bar 1)$). In the case $g\ne g^{-1}$
we can regrade $M_2(F)$ by $\mathbb Z/3\mathbb Z$ (the elementary grading is again defined by the couple $(\bar 0,\bar 1)$).

Consider now the case $n=3$.
The matrix with the entries $g_ig_j^{-1}$
is of the form
$A=\left(\begin{smallmatrix}
1      & g      & gh \\
g^{-1} & 1      & h \\
h^{-1}g^{-1} & h^{-1} & 1
\end{smallmatrix}\right)$ and the same grading can be defined by the triple $(gh,h,1)$.
If $g\in \lbrace 1, h, h^{-1}, gh, h^{-1}g^{-1} \rbrace$
or $h\in \lbrace 1, g, g^{-1}, gh, h^{-1}g^{-1} \rbrace$, then the subgroup of $G$ generated by $g$ and $h$ is abelian and by Proposition~\ref{prop:abelian} the algebra $M_3(F)$ can be regraded by a finite group.
Therefore below we assume that $1, g, h, gh$
are pairwise distinct and the condition $\lbrace g,h,gh\rbrace \cap \lbrace g^{-1},h^{-1},h^{-1}g^{-1}\rbrace
\ne \varnothing$ is true only if some of the equalities $g=g^{-1}$, $h=h^{-1}$, $gh=h^{-1}g^{-1}$ hold, in other words, only if
at least one of the elements $g$, $h$, $gh$ is of order $2$.

For three possible equalities above we have $2^3=8$ different cases depending on whether they hold or not.

\begin{enumerate}
\item $\lbrace g,h,gh\rbrace \cap \lbrace g^{-1},h^{-1},h^{-1}g^{-1}\rbrace
= \varnothing$.
Here we can apply Theorem~\ref{TheoremFiniteRegradingAllDifferent}
since all the entries, except the diagonal ones, are different.

\item $gh = h^{-1}g^{-1}$, but $\lbrace g,h\rbrace \cap \lbrace g^{-1},h^{-1}\rbrace
= \varnothing$.
Here $\Gamma$ is weakly equivalent to the elementary $\mathbb Z/6\mathbb Z$-grading defined by $g\mapsto \bar 1$ and $h\mapsto \bar 2$,
i.e. by the triple $(\bar 3, \bar 2, \bar 0)$.

\item $gh = h^{-1}g^{-1}$, $g=g^{-1}$, but $h\ne h^{-1}$.
Here $\Gamma$ is weakly equivalent to the elementary $S_3$-grading defined by $g\mapsto (12)$ and $h\mapsto (123)$.

\item $gh = h^{-1}g^{-1}$, $g\ne g^{-1}$, $h = h^{-1}$. Here $\Gamma$ is weakly equivalent to the elementary $S_3$-grading defined by $g\mapsto (123)$ and $h\mapsto (12)$.

\item $gh = h^{-1}g^{-1}$, $g=g^{-1}$, $h=h^{-1}$. Here $\Gamma$ is weakly equivalent to the elementary $(\mathbb Z/2\mathbb Z) \times (\mathbb Z/2\mathbb Z)$-grading
defined by $g\mapsto(\bar 0, \bar 1)$ and $h\mapsto(\bar 1, \bar 0)$.
(Since in this case $g$ and $h$ commute, we could have used Proposition~\ref{prop:abelian} instead.)

\item $gh \ne h^{-1}g^{-1}$, $g=g^{-1}$, $h\ne h^{-1}$.
Here $\Gamma$ is weakly equivalent to the elementary
$\mathbb Z/6\mathbb Z$-grading defined by $g\mapsto \bar 3$ and $h\mapsto \bar 1$.

\item $gh \ne h^{-1}g^{-1}$, $g\ne g^{-1}$, $h=h^{-1}$.
This case is treated analogously.

\item $gh \ne h^{-1}g^{-1}$, $g=g^{-1}$, $h=h^{-1}$. Here $\Gamma$ is weakly equivalent to the elementary $S_3$-grading defined by $g\mapsto (12)$, $h \mapsto (13)$.
\end{enumerate}

All the cases have been considered and $\Gamma$ is weakly equivalent to an elementary grading by a finite group.
\end{proof}

\begin{remark}
The elementary $S_3$-grading on $M_3(F)$ defined above by $g\mapsto (12)$, $h \mapsto (13)$ is not weakly equivalent to any of the gradings by abelian groups since $(12)(13)\ne (13)(12)$.
\end{remark}

\begin{proof}[Proof of Theorem~\ref{TheoremRegradeElementaryOmega}]
Theorem~\ref{TheoremRegradeElementaryOmega} immediately follows from Corollaries~\ref{CorollaryMatrixElemGreaterSize}, \ref{CorollaryFiniteRegradingImpossible349} and Theorem~\ref{TheoremFiniteRegradingSmallMatrix}.
\end{proof}

\section*{Acknowledgements}

The authors are grateful to Yuval Ginosar for his help in the construction of Example~\ref{ExampleGinosar}. In addition,
the authors appreciate the referee for carefully reading the manuscript
and providing a list of misprints and suggestions.

\end{document}